\definecolor{darkred}{rgb}{0.9,0.1,0.1}
\newtheorem{proposition}{Proposition}[section]
\newtheorem{theorem}{Theorem}[section]
\newtheorem{lemma}{Lemma}[section]
\newtheorem{corollary}{Corollary}
\newtheorem{remark}{Remark}[section]
{\rm}
\definecolor{darkred}{rgb}{0.9,0.1,0.1}
\def\eps{\varepsilon}
\def\eps{\varepsilon}
\definecolor{darkred}{rgb}{0.9,0.1,0.1}
\begin{document}

\title{Homogenization of biased convolution type operators}

\author{A. Piatnitski\thanks{Institute for Information Transmission Problems of RAS,
 19, Bolshoi Karetnyi per. build.1,
 127051 Moscow, Russia  and  The Arctic University of Norway, Campus in Narvik,
 P.O.Box 385, Narvik 8505, Norway (apiatnitski@gmail.com) }
\and
\setcounter{footnote}{2}
E. Zhizhina\thanks{Institute for Information Transmission Problems of RAS,
 19, Bolshoi Karetnyi per. build.1,
 127051 Moscow, Russia  (ejj@iitp.ru)} }

\date{}
\maketitle

\begin{abstract}
This paper deals with homogenization of parabolic problems  for  integral convolution type operators with a
non-symmetric jump kernel in a periodic elliptic medium.   It is shown that the homogenization result holds in moving
coordinates. We determine the corresponding effective velocity and prove that the limit operator is a second order
parabolic operator with constant coefficients.  We also consider the behaviour of the effective velocity in the case of small antisymmetric perturbations of a symmetric kernel. 
\end{abstract}

\section{Introduction}

The paper deals with homogenization of parabolic problems for an integral convolution type operator of the form
\begin{equation}\label{L_u}
(L u)(x) \ = \ \int\limits_{\mathbb R^d} a(x-y) \mu(x,y) (u(y) - u(x)) dy
\end{equation}
with a non-symmetric jump kernel $a(z)$ and a periodic positive function $\mu(x,y)$.\\
In our previous work  \cite{PZh}  we considered an integral convolution type operator defined by
\begin{equation}\label{L_u_prev}
(L u)(x) \ = \ \lambda(x)\int\limits_{\mathbb R^d} a(x-y) \mu(y) (u(y) - u(x)) dy
\end{equation}
under the assumption that $\lambda(x)$ and $\mu(y)$ are bounded positive periodic functions characterizing the properties of the medium, and $a(z)$ is the jump kernel being a positive integrable function such that $a(-z)=a(z)$.
We then made a diffusive scaling of this operator
\begin{equation}\label{L_u_biseps}
(L^\eps u)(x) \ = \ \eps^{-d-2}\lambda\Big(\frac{x}{\eps}\Big) \int\limits_{\mathbb R^d} a\Big(\frac{x-y}{\eps}\Big) \mu\Big(\frac{y}{\eps}\Big) (u(y) - u(x)) dy,
\end{equation}
where $\eps$ is a positive scaling factor, $\eps\ll 1$.  Then we proved
the homogenization result for the operators $L^\eps$. More precisely, we proved
that the family $L^\eps$ converges, as $\eps\to0$, to a second order
divergence form elliptic operator with constant coefficient in the so-called $G$-topology that is
for any $m>0$ the family of operators $(-L^\eps+m)^{-1}$ converges strongly in $L^2(\mathbb R^d)$
to the operator $(-L^0+m)^{-1}$ where $L^0=\Theta^{ij}\frac{\partial^2}{\partial x^i\partial x^j}$
with a positive definite constant matrix $\Theta$.

In this work we consider homogenization problems for convolution type operators $L$ with a kernel of the form
$a(x-y) \mu(x,y)$, where the function $a(z)$ is not assumed to be even. More precisely, we assume that $a(z)$
is the generic non-negative integrable function in $\mathbb R^d$ that has finite second moments. Concerning the
coefficient  $\mu(x,y)$ we assume that this function is periodic both in $x$ and $y$ and satisfies the estimates
$0<\alpha_1\leq\mu(x,y)\leq\alpha_2$ for some positive constants $\alpha_1$ and $\alpha_2$.

In this framework it is natural to study the evolution version of the corresponding homogenization problem. 
Namely,
we are going to investigate the limit behaviour of a solution to the following parabolic equation:
\begin{equation}\label{pbm_evol_eps}
\partial_t u(x,t)-(L^\eps u)(x,t) \ = \ 0, \qquad u(x,0)=u_0(x).
\end{equation}
Clearly, under the above conditions on $a$ and $\mu$ the effective velocity need not be zero. This raises the
following two natural problems:  to determine the effective velocity, and to obtain homogenization results in the
corresponding moving coordinates.
In the paper we address both this questions. The main homogenization results are formulated in Theorem \ref{Theorem1} below.

We also consider a small antisymmetric perturbation of a symmetric kernel and study how the effective velocity and other effective characteristics
react on this small perturbation. These results are summarized in Lemma \ref{ER}.  In particular, we prove that the Einstein relation holds for the perturbation of special structure.

It is interesting to compare the effective behaviour of parabolic equations for nonlocal non-symmetric
convolution type operators and for differential operators of convection-diffusion type.
Homogenization problems for non-stationary convection-diffusion equations in periodic media
have been investigated in the works  \cite{AlOr},  \cite{DoPi}.
It was shown in  \cite{AlOr},  \cite{DoPi}, that the homogenization takes place in the moving coordinates
$X(t)=x-\frac{b}\eps \, t$ with an appropriate constant vector $ b$.
For an elliptic  diffusion in a periodic environment and in a random ergodic environment with a finite range of
dependence the Einstein relation was proved in \cite{GaMaPi},   for a random walk with i.i.d. conductances
it was justified in \cite{GaGuNa}.

\section{Problem setup and main results}

In this section we provide all the conditions on the coefficients of operator $L$
and then formulate our main results.

Regarding the function $a(\cdot)$ we assume that
\begin{equation}\label{M1}
a(z) \in  L^{1}(\mathbb R^d), \; \; a(z) \ge 0, \; \; \hat a(\eta) \in  L^{2}(\mathbb T^d),
\end{equation}
and
\begin{equation}\label{M2}
\| a \|_{L^1(\mathbb R^d)}  = \int\limits_{\mathbb R^d} a(z) \, dz = a_1 >0; \qquad \int\limits_{\mathbb R^d} |z|^2 a(z) \, dz < \infty.
\end{equation}

The function $\mu(x,y)$ is periodic in both variables and bounded from above and from below:
\begin{equation}\label{lm}
0< \alpha_1 \le  \mu(x,y) \le \alpha_2 < \infty.
\end{equation}
From now on we identify periodic functions in $\mathbb R^d$ with functions defined on the torus $\mathbb{T}^d = {\mathbb R^d}/{\mathbb Z^d} $. The operator $L$ is a bounded not necessary symmetric operator in $L^2(\mathbb R^d)$, see \cite{PZh}.

In what follows we also use the function
$$
\hat a(\eta) = \sum\limits_{k \in Z^d} a(\eta +k), \; \eta \in  {\mathbb T^d}.
$$
Notice that $\hat a$ is non-negative, and $\|\hat a\|_{L^1(\mathbb T^d)}=\|a\|_{L^1(\mathbb R^d)}$.

Let us consider the following evolution operator
$$
H = \frac{\partial}{\partial t} - L,
$$
with 
$L$  defined in \eqref{L_u}.
Then, performing the change of variables $x \to \varepsilon x, \; t \to \varepsilon^2 t$, we obtain the family of rescaled operators
\begin{equation}\label{A_eps}
H^\varepsilon u = \frac{\partial u}{\partial t} -  L^{\varepsilon} u, \quad \mbox{where} \;\;
(L^\varepsilon u)(x,t) \ = \ \frac{1}{\varepsilon^{d+2}} \int\limits_{\mathbb R^d} a\Big(\frac{x-y}{\eps}\Big) \mu\Big(\frac{x}{\eps}, \frac{y}{\eps}\Big) (u(y,t) - u(x,t)) dy.
\end{equation}

The main result of this paper is the following homogenization theorem.

\begin{theorem}\label{Theorem1}
Assume that the functions $a(z)$ and $\mu(x,y)$ satisfy conditions \eqref{M1} - \eqref{lm}.

Let $u^{\varepsilon}(x,t)$ be the solution of the evolution problem
\begin{equation}\label{PE_eps}
\frac{\partial u^{\varepsilon}}{\partial t} =  L^{\varepsilon} u^{\varepsilon}, \quad u^{\varepsilon}(x,0) = \varphi(x), \quad \varphi \in L^2(\mathbb R^d),
\end{equation}
and $u^0(x,t)$ be the solution of a parabolic problem
\begin{equation}\label{PE}
\frac{\partial u^{0}}{\partial t} = \Theta \cdot \nabla \nabla u^{0}, \quad u^{0}(x,0) = \varphi(x), \quad \varphi \in L^2(\mathbb R^d).
\end{equation}
Then
there exist a vector $b \in {\mathbb R^d}$ and a positive definite constant matrix
$\Theta$ such that for any $T>0$:
\begin{equation}\label{T1}
\| u^{\varepsilon}\big( x+\frac{b}{\eps}\ t, \ t \big) - u^0 (x, t) \|_{L^{\infty}((0,T),\ L^2(\mathbb R^d) )} \to 0 \quad \mbox{ as } \; \varepsilon \to 0.
\end{equation}

\end{theorem}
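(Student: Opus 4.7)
The plan is to reduce the biased, non-local, non-symmetric homogenization problem to an effectively centered diffusive problem by identifying and subtracting the correct $O(\eps^{-1})$ drift, and then to carry out a corrector-type analysis modeled on the approach of \cite{PZh}. First I would study the projection of $L$ to the torus. For periodic test functions $v$ on $\mathbb{T}^d$ one has
\[
(\mathcal{L}v)(x)=\int_{\mathbb{T}^d}\hat a(x-y)\mu(x,y)\bigl(v(y)-v(x)\bigr)\,dy,
\]
and since $\mu\ge\alpha_1>0$ and $\hat a\ge 0$ with $\|\hat a\|_{L^1}=a_1>0$, a Perron--Frobenius type argument yields a unique invariant probability density $\pi(x)$ with $\mathcal{L}^*\pi=0$ on $\mathbb{T}^d$, bounded above and below. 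Setting $V(x):=\int_{\mathbb{R}^d}(y-x)\,a(x-y)\mu(x,y)\,dy$, which is periodic and bounded because of \eqref{M2} and \eqref{lm}, I would then define the effective velocity
\[
b:=\int_{\mathbb{T}^d}V(x)\pi(x)\,dx.
\]

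Next I would solve the cell (corrector) problem: for each coordinate $j=1,\dots,d$, find a periodic $\chi_j\in L^2(\mathbb{T}^d)$ solving $\mathcal{L}\chi_j = V_j-b_j$. Solvability by the Fredholm alternative is ensured because $\int_{\mathbb{T}^d}(V_j-b_j)\pi\,dx=0$ by construction of $b$; the Fredholm property for $\mathcal{L}$ on its invariant subspace follows from the spectral analysis of $\mathcal{L}$ already developed in \cite{PZh} together with the boundedness and strict positivity of $\mu$. In the new variable $X=x-bt/\eps$ the rescaled equation has no residual $O(\eps^{-1})$ drift after subtraction of the correctors, so the passage to the limit should be carried out on $\tilde u^\eps(X,t):=u^\eps(X+bt/\eps,t)$. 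I would then introduce the two-scale ansatz $\tilde u^\eps(X,t)\approx u^0(X,t)+\eps\,\chi_j(X/\eps)\partial_{X_j}u^0+\eps^2\chi^{(2)}_{jk}(X/\eps)\partial^2_{X_jX_k}u^0$, insert it into $(\partial_t-L^\eps)$, expand by Taylor's formula for smooth $u^0$, and collect the $O(\eps^{-1})$ and $O(1)$ terms. The former vanishes by the choice of $\chi_j$, and the latter, once averaged against $\pi$, fixes
\[
\Theta_{jk}=\tfrac12\int_{\mathbb{T}^d}\pi(x)\!\int_{\mathbb{R}^d}(y-x)_j(y-x)_k\,a(x-y)\mu(x,y)\,dy\,dx+\text{(corrector terms involving }\chi_j\text{)}.
\]

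The remaining step is quantitative: derive a uniform-in-$\eps$ energy estimate for the error $r^\eps:=\tilde u^\eps-u^0-\eps\chi_j(\cdot/\eps)\partial_j u^0$ by testing the equation for $r^\eps$ against itself with the weight $\pi(\cdot/\eps)$, bounding the residual terms using $\hat a\in L^2(\mathbb{T}^d)$ and finite second moment \eqref{M2}, and concluding convergence in $L^\infty((0,T);L^2(\mathbb{R}^d))$ by a Duhamel / Gronwall argument together with a density argument reducing from smooth $\varphi$ to arbitrary $\varphi\in L^2$. The two main obstacles I anticipate are, first, proving positive definiteness of the effective matrix $\Theta$ in the non-symmetric setting: here I would symmetrize the quadratic form by exploiting the pairing between $\chi_j$ and the corresponding dual correctors for the adjoint operator $\mathcal{L}^*$ (whose invariant measure is Lebesgue) so that $\Theta$ arises as a Dirichlet form on the corrected functions; and second, handling the non-local corrector terms in the residual, which do not reduce to pointwise products of $\chi_j$ with derivatives of $u^0$ but to convolutions, requiring careful use of the $L^2$-boundedness of $L^\eps$ uniformly in $\eps$ inherited from \eqref{M1}--\eqref{lm}.
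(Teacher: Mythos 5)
Your strategy is essentially the paper's: pass to the torus, find the invariant density (your $\pi$ is the paper's $v_0$), fix $b$ by the solvability condition, build two correctors in the moving frame, exhibit the symmetric part of $\Theta$ as a $\pi$-weighted Dirichlet form of the corrected coordinates, and close with a $\pi(\cdot/\eps)$-weighted energy estimate plus density in $L^2$. Two points, however, need repair. The first is a sign error in the drift. Your $V(x)=\int(y-x)a(x-y)\mu(x,y)\,dy$ is the local mean displacement of the jump process, but the solution of the backward equation translates in the direction $-V$: in the model case $a\approx\delta_{z_0}$, $\mu\equiv1$ one gets $\partial_t u^\eps\approx-\eps^{-1}z_0\cdot\nabla u^\eps$, hence $u^\eps(x,t)=\varphi(x-z_0t/\eps)$ and the $b$ of \eqref{T1} equals $z_0=-\int V$. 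So the theorem requires $b=-\int_{\mathbb T^d}V\pi\,d\xi=\int_{\mathbb T^d}\int_{\mathbb R^d}(\xi-q)a(\xi-q)\mu(\xi,q)\,dq\,v_0(\xi)\,d\xi$; with your $b=+\int V\pi$ the $O(\eps^{-1})$ terms do not cancel in the frame $X=x-bt/\eps$, and the averaged residual drift is $2\bar V/\eps$, not $0$.

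The more substantive gap is that everything in your argument hinges on the assertion that $\mathcal{L}^*\pi=0$ has a solution bounded above \emph{and below} by positive constants: this normalizes $b$, gives Fredholm solvability of both cell problems, makes the limiting quadratic form strictly positive, and is indispensable in the energy step (you need $\tilde\gamma_1\|v\|_{L^2}^2\le\int v^2\,\pi(x/\eps)\,dx\le\tilde\gamma_2\|v\|_{L^2}^2$ together with $(L^\eps v,\,v\,\pi(\cdot/\eps))\le0$). A ``Perron--Frobenius type argument'' is not automatic here: $a$ is only a nonnegative $L^1$ function with no lower bound on any set, so the positivity-improving property needed for Krein--Rutman (and for the uniqueness and strict positivity of $\pi$) is not free. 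The paper devotes a whole section to exactly this (Lemma \ref{GS}): a Lebesgue-point argument gives $a*a\ge\gamma>0$ on a small ball, iterated convolutions yield $\hat a^{*N}\ge\gamma_0$ on all of $\mathbb T^d$ (Lemma \ref{2conv}), and only then do Krein--Rutman plus the eigenvalue identity produce the two-sided bounds \eqref{GSbound}, \eqref{v0bound}. You must supply this (or an equivalent irreducibility statement); without it neither the definition of $b$ and $\Theta$ nor the a priori estimate closes.
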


\medskip

Observe that
\begin{equation}\label{T1bis}
\| u^{\varepsilon}\big( x+\frac{b}{\eps}\ t, \ t \big) - u^0 (x, t) \|_{L^{\infty}((0,T),\ L^2(\mathbb R^d) )} = \| u^{\varepsilon}\big( x, \ t \big) - u^0 (x-\frac{b}{\eps}\ t, t) \|_{L^{\infty}((0,T),\ L^2(\mathbb R^d) )}
\end{equation}

\section{Correctors and auxiliary cell problems}
\label{s_corr}

In this section we approximate a solution $u^{\varepsilon}$ of problem (\ref{PE_eps}) using an ansatz constructed in terms of a solution $u^0$ of the limit problem (\ref{PE}) with the same initial condition $\varphi$. To this end we consider auxiliary periodic problems, whose solutions (the so-called correctors) are used in the construction of this ansatz and define the coefficients $\Theta$  of effective operator in (\ref{PE}). We first deal with functions from the Schwartz space ${\cal{S}}(\mathbb R^d)$ that are smooth in $t$ on any interval $t \in (0,T)$.

For a given $ u \in C^{\infty}((0,T),{\cal{S}}(\mathbb R^d))$ we introduce the following ansatz:
\begin{equation}\label{w_eps}
w^{\varepsilon}(x,t) \ = \ u(x - \frac{b}{\varepsilon} \, t,t) + \varepsilon \varkappa_1 (\frac{x}{\varepsilon})\cdot \nabla u(x - \frac{b}{\varepsilon}\, t,t) + \varepsilon^2 \varkappa_2 (\frac{x}{\varepsilon})\cdot \nabla \nabla u(x - \frac{b}{\varepsilon}\, t,t),
\end{equation}
where the vector $b \in \mathbb R^d$ and correctors $\varkappa_1 \in (L^2(\mathbb T^d))^d$  and $\varkappa_2 \in (L^2(\mathbb T^d))^{d^2}$ (a vector function $\varkappa_1$ and a matrix function $\varkappa_2$) will be defined below.

\begin{lemma}\label{ml}   Assume that $u \in C^{\infty}((0,T),{\cal{S}}(\mathbb R^d))$. Then there exist functions $\varkappa_1 \in (L^2(\mathbb T^d))^d$  and $\varkappa_2 \in (L^2(\mathbb T^d))^{d^2}$, a vector $b \in \mathbb R^d$ and a positive definite matrix $\Theta$ such that  for the function $w^{\varepsilon}$ defined by \eqref{w_eps} we obtain
\begin{equation}\label{mle}
H^{\varepsilon} w^{\varepsilon}(x,t) \ :=  \frac{\partial w^\varepsilon}{\partial t} -  L^{\varepsilon} w^\varepsilon \ = \  \Big(\frac{\partial u}{\partial t}(x^\varepsilon,t) -
 \Theta \cdot \nabla \nabla u(x^\varepsilon,t) \ + \ \phi^\varepsilon(x^\varepsilon,t)\Big)|_{_{\, x^\varepsilon = x - \frac{b}{\varepsilon} \, t}},
\end{equation}
where
\begin{equation}\label{mle-rest}
\lim\limits_{\varepsilon \to 0}\| \phi^\varepsilon \|_{L^{\infty}((0,T),\ L^2(\mathbb R^d) )} = 0.
\end{equation}
\end{lemma}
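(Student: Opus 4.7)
The plan is to substitute the ansatz \eqref{w_eps} into $H^{\varepsilon}$, expand everything in powers of $\varepsilon$, and then choose $\varkappa_1$, $\varkappa_2$, $b$ and $\Theta$ so that the leading-order terms cancel. Write $\xi = x/\varepsilon$ for the fast variable and $x^\varepsilon = x - \frac{b}{\varepsilon}t$ for the shifted slow variable. A direct application of the chain rule gives
\[
\partial_t w^\varepsilon = \partial_t u(x^\varepsilon,t) - \tfrac{1}{\varepsilon} b\cdot\nabla u(x^\varepsilon,t) - \varkappa_1(\xi)\cdot(b\cdot\nabla)\nabla u(x^\varepsilon,t) + O(\varepsilon),
\]
while for $L^\varepsilon$ I would make the substitution $y=x-\varepsilon z$ so that
\[
(L^\varepsilon f)(x,t) = \tfrac{1}{\varepsilon^2}\int_{\mathbb R^d} a(z)\,\mu(\xi,\xi-z)\bigl(f(x-\varepsilon z,t)-f(x,t)\bigr)\,dz,
\]
and apply it to each of the three terms of $w^\varepsilon$, Taylor-expanding $u(x^\varepsilon-\varepsilon z,t)$ and $\nabla u(x^\varepsilon-\varepsilon z,t)$ through the required order. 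The second-moment bound in \eqref{M2} guarantees that the Taylor remainders are integrable against $a(z)$.

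Grouping the outcome by orders of $\varepsilon$ yields cell problems on $\mathbb T^d$. Introduce the bounded non-symmetric cell operator
\[
(\mathcal L \varkappa)(\xi) = \int_{\mathbb R^d} a(z)\,\mu(\xi,\xi-z)\bigl(\varkappa(\xi-z)-\varkappa(\xi)\bigr)\,dz.
\]
The coefficient of $\varepsilon^{-1}$ equals $\bigl(-b + V(\xi) + \mathcal L\varkappa_1(\xi)\bigr)\cdot\nabla u(x^\varepsilon,t)$, where $V(\xi) = \int a(z)\,\mu(\xi,\xi-z)\,z\,dz$. Solvability of $\mathcal L\varkappa_1 = b - V$ in $L^2(\mathbb T^d)$ follows from the Fredholm alternative once we identify the kernel of $\mathcal L^*$: by a Perron--Frobenius argument (using positivity of $a\mu$ and ergodicity in the sense of \cite{PZh}) this kernel is spanned by a unique positive periodic density $\rho$. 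The compatibility condition $\langle b - V,\rho\rangle_{L^2(\mathbb T^d)} = 0$ then defines the effective drift
\[
b = \frac{\int_{\mathbb T^d} V(\xi)\rho(\xi)\,d\xi}{\int_{\mathbb T^d}\rho(\xi)\,d\xi},
\]
and fixes $\varkappa_1$ uniquely modulo constants.

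The coefficient of $\varepsilon^{0}$ produces, after collecting the contributions from $\partial_t w^\varepsilon$, from $L^\varepsilon[u]$ (second-order Taylor term), from $L^\varepsilon[\varepsilon\varkappa_1\cdot\nabla u]$ (first-order Taylor term plus a shift in $\varkappa_1$) and from $L^\varepsilon[\varepsilon^2\varkappa_2:\nabla^2 u]$, a relation of the form
\[
\partial_t u(x^\varepsilon,t) - \Theta\cdot\nabla\nabla u(x^\varepsilon,t) + \bigl(\mathcal L\varkappa_2(\xi) - M(\xi) + \Theta\bigr):\nabla\nabla u(x^\varepsilon,t),
\]
where $M(\xi)$ is the matrix built from the second moment $\frac12\int a(z)\mu(\xi,\xi-z)\,z\otimes z\,dz$, from $\varkappa_1(\xi)\otimes b$ and from $\int a(z)\mu(\xi,\xi-z)\varkappa_1(\xi-z)\otimes z\,dz$. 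The Fredholm compatibility $\langle M - \Theta,\rho\rangle=0$ then uniquely defines the effective matrix $\Theta$ and makes $\mathcal L\varkappa_2 = M - \Theta$ solvable. Positive definiteness of $\Theta$ will be checked by rewriting $\Theta\,\theta\cdot\theta$ as a Dirichlet-type form involving $\rho$ and the corrector $\theta\cdot\varkappa_1$; this is the step that requires the most care and is where the non-self-adjointness creates the main technical difficulty, since both $\mathcal L$ and its invariant density $\rho$ must be handled simultaneously.

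It remains to estimate the terms of order $\varepsilon^1$ and higher, which together form $\phi^\varepsilon(x^\varepsilon,t)$. Each such term is a product of a periodic $L^2(\mathbb T^d)$ function of $x/\varepsilon$ (built from $\varkappa_1$, $\varkappa_2$, $a$, $\mu$) with a derivative of $u$ of order at most three, or a Taylor remainder controlled by $\varepsilon |z|^3 a(z)$ truncated using \eqref{M2} and a standard $\varepsilon$-dependent splitting $|z|\le R/\varepsilon$, $|z|>R/\varepsilon$. Since $u \in C^\infty((0,T),\mathcal S(\mathbb R^d))$ and the periodic factors are uniformly bounded in $L^2(\mathbb T^d)$, an application of the Parseval-type estimate used in \cite{PZh} (or direct computation using $\|f(\cdot/\varepsilon)g\|_{L^2(\mathbb R^d)} \le \|f\|_{L^2(\mathbb T^d)}\|g\|_{L^\infty}$ type bounds on the supports of $\nabla^k u$) yields \eqref{mle-rest}, which completes the proof.
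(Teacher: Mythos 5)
Your outline follows the paper's proof essentially step for step: substitute the ansatz, change variables $y=x-\varepsilon z$, Taylor-expand with integral remainders, read off the $\varepsilon^{-1}$ and $\varepsilon^{0}$ cell problems, close them by the Fredholm alternative with the compatibility conditions tested against the invariant density, and push the leftovers into $\phi^\varepsilon$. The algebraic skeleton (your $\mathcal L$, $V$, $M$ are the paper's $A$, $f$, $F$ up to signs and a transposition; your $\rho$ is the paper's $v_0$) is correct, as is the treatment of the remainder via the second-moment bound and an $\varepsilon$-dependent truncation. The sign in your first cell problem should read $\mathcal L\varkappa_1 = V - b$ rather than $b-V$, but this does not affect the compatibility condition or the value of $b$.

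However, the two places where the non-symmetry of $a$ actually costs something are asserted rather than proved, and neither is routine. First, the identification of $\mathrm{Ker}\,\mathcal L^*$ with the span of a single strictly positive, bounded density cannot be dispatched by citing ``positivity of $a\mu$'': the kernel $a$ is only assumed nonnegative and may vanish on large sets, so $K$ need not be positivity improving directly. The paper has to prove (its Lemma \ref{2conv}, via a Lebesgue-point argument) that some convolution power $\hat a^{*N}$ is bounded below on the torus before Krein--Rutman applies, and then derives the two-sided bounds \eqref{GSbound} on the ground state; your proposal contains no substitute for this step. Second, positive definiteness of (the symmetric part of) $\Theta$ is exactly the claim you defer with ``will be checked.'' The verification is not a formality: one must show that $\Theta^{ij}+\Theta^{ji}$ equals the nonnegative form $I^{ij}$ of \eqref{I}, and the cancellation of the cross terms (the identity $I_3^{ij}=-\tilde D_2^{ij}$ in \eqref{I3-1}--\eqref{I3-2}) uses in an essential way the adjoint equation \eqref{v0} satisfied by $v_0$, i.e.\ the fact that $v_0$ is the invariant density of the non-self-adjoint generator. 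Without carrying out this computation the conclusion that $\Theta$ is positive definite --- which is part of the statement of the lemma --- remains unproved. A minor further point: third moments of $a$ are not assumed, so the remainder cannot literally be bounded by $\varepsilon|z|^3a(z)$; one must keep the integral form of the Taylor remainder and combine the truncation with dominated convergence using only \eqref{M2}, as in Proposition 5 of \cite{PZh}.
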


\begin{proof}
Substituting the expression on the right-hand side of \eqref{w_eps} for $u$ in \eqref{A_eps} and using the notation $x^\varepsilon = x - \frac{b}{\varepsilon} \, t$ we get
\begin{equation}\label{Aw}
\begin{array}{rl}
\displaystyle
H^{\varepsilon}w^{\varepsilon}(x,t) \
= \ \frac{\partial w^{\varepsilon}(x, t)}{\partial t} -  \frac{1}{\varepsilon^{d+2}} \int\limits_{\mathbb R^d} a\big(\frac{x-y}{\eps}\big) \mu\big(\frac{x}{\eps}, \frac{y}{\eps}\big) (w^{\varepsilon}(y,t) - w^{\varepsilon}(x,t)) dy
\\[3mm] \displaystyle = \
\big( - \frac{b}{\varepsilon} \big) \cdot \nabla u (x^\varepsilon, t)  + \frac{\partial u}{\partial t} (x^\varepsilon, t) + \varepsilon \varkappa_1 \big(\frac{x}{\varepsilon}\big) \otimes\big( - \frac{b}{\varepsilon} \big) \cdot \nabla \nabla u (x^\varepsilon, t)
\\[3mm] \displaystyle + \
 \varepsilon \varkappa_1 \big(\frac{x}{\varepsilon}\big)  \cdot \nabla \frac{\partial u}{\partial t} (x^\varepsilon, t) +
 \varepsilon^2 \varkappa_2 \big(\frac{x}{\varepsilon}\big) \otimes\big( - \frac{b}{\varepsilon} \big) \cdot \nabla \nabla \nabla u (x^\varepsilon, t) + \varepsilon^2 \varkappa_2 \big(\frac{x}{\varepsilon}\big)  \cdot \nabla \nabla \frac{\partial u}{\partial t} (x^\varepsilon, t)
\\[3mm] \displaystyle - \
\frac{1}{\varepsilon^{d+2}} \int\limits_{\mathbb R^d} a \big( \frac{x-y}{\varepsilon} \big) \mu \big( \frac{x}{\varepsilon}, \frac{y}{\varepsilon} \big)
\bigg\{ u(y^\varepsilon,t)+ \varepsilon \varkappa_1 \big(\frac{y}{\varepsilon}\big)\cdot \nabla u(y^\varepsilon,t) +
\\[3mm] \displaystyle +\
\varepsilon^2 \varkappa_2 \big(\frac{y}{\varepsilon}\big)\cdot \nabla \nabla u(y^\varepsilon,t) -
u(x^\varepsilon,t)-\varepsilon \varkappa_1 \big(\frac{x}{\varepsilon} \big)\cdot \nabla u(x^\varepsilon,t) - \varepsilon^2 \varkappa_2 \big(\frac{x}{\varepsilon}\big)\cdot \nabla \nabla u(x^\varepsilon,t) \bigg\} dy,\\
\end{array}
\end{equation}
where the symbol $\otimes$ stands for tensor product, in particular 
$$ 
\varkappa_2 \big(\frac{x}{\varepsilon}\big) \otimes\big( - \frac{b}{\varepsilon} \big) \cdot \nabla \nabla \nabla u= \varkappa^{ij}_2 \big(\frac{x}{\varepsilon}\big) \big( - \frac{b^k}{\varepsilon} \big)\, \partial_{x^i} \partial_{x^j} \partial_{x^k} u.
$$ 
Here and in the sequel we assume
summation over repeated indices.

We collect the terms in \eqref{Aw} that give the main contribution on the right hand side of equality \eqref{mle};
the higher order  terms form the  remainder $\phi^\varepsilon$. We do this separately for $\frac{\partial w^{\varepsilon}}{\partial t}$
and for $L^\varepsilon w^\varepsilon$. For $\frac{\partial w^{\varepsilon}}{\partial t}$ we obtain 
\begin{equation}\label{Aw-t}
\frac{\partial w^{\varepsilon}(x, t)}{\partial t} =
\big( - \frac{b}{\varepsilon} \big) \cdot \nabla u (x^\varepsilon, t)  + \frac{\partial u}{\partial t} (x^\varepsilon, t) + \varepsilon \varkappa_1 \big(\frac{x}{\varepsilon}\big)\otimes \big( - \frac{b}{\varepsilon} \big) \cdot \nabla \nabla u (x^\varepsilon, t) + \phi_\varepsilon^{(0)}(x,t),
\end{equation}
with
\begin{equation}\label{Aw-t-r}
\phi_\varepsilon^{(0)}(x,t) =  \varepsilon \varkappa_1 \big(\frac{x}{\varepsilon}\big)  \cdot \nabla \frac{\partial u}{\partial t} (x^\varepsilon, t) +  \varepsilon^2 \varkappa_2 \big(\frac{x}{\varepsilon}\big)\otimes \big( - \frac{b}{\varepsilon} \big) \cdot \nabla \nabla \nabla u (x^\varepsilon, t) + \varepsilon^2 \varkappa_2 \big(\frac{x}{\varepsilon}\big)  \cdot \nabla \nabla \frac{\partial u}{\partial t} (x^\varepsilon, t).
\end{equation}
After change of variables $z = \frac{x-y}{\varepsilon} = \frac{x^\varepsilon - y^\varepsilon}{\varepsilon}$ we get
\begin{equation}\label{ml_1}
\begin{array}{l}\displaystyle
\!\!\!(L^{\varepsilon} w^{\varepsilon})(x,t) \ = \ \frac{1}{\varepsilon^{2}} \int\limits_{\mathbb R} dz \  a (z)  \mu \big(\frac{x}{\varepsilon}, \frac{x}{\varepsilon} -z \big) \bigg\{ u(x^\varepsilon -\varepsilon z,t)
+ \varepsilon \varkappa_1 \big(\frac{x}{\varepsilon}-z \big)\cdot \nabla u (x^\varepsilon -\varepsilon z,t)
\\[4mm] \displaystyle
+\,\varepsilon^2 \varkappa_2 \big( \frac{x}{\varepsilon}-z \big)\cdot \nabla \nabla u(x^\varepsilon -\varepsilon z,t)
 - u(x^\varepsilon,t) -\varepsilon \varkappa_1 \big( \frac{x}{\varepsilon} \big)\cdot\nabla u(x^\varepsilon,t) - \varepsilon^2 \varkappa_2 \big(\frac{x}{\varepsilon} \big)\cdot \nabla \nabla u(x^\varepsilon,t) \bigg\}.
\end{array}
\end{equation}
Using the following relations
\begin{eqnarray*}
&&u(y) \ = \ u(x) + \int_0^1 \frac{\partial}{\partial q} \ u(x+(y-x)q) \ dq \ = \ u(x) + \int_0^1 \nabla u (x+  (y-x)q) \cdot (y-x) \ dq,
\\
&&u(y) \ = \ u(x) + \nabla u(x) \cdot (y-x) + \int_0^1  \nabla \nabla u(x+(y-x)q) (y-x)\cdot (y-x) (1-q) \ dq
\end{eqnarray*}
based on the integral form of a remainder in the Taylor expansion and  being valid for any $x, y \in \mathbb R^d$, we  rearrange (\ref{ml_1}) as follows
\begin{equation*}
\begin{array}{l}
(L^{\varepsilon} w^{\varepsilon})(x,t) =
\\[2mm]
\displaystyle
  \frac{1}{\varepsilon^{2}}\!\! \int\limits_{\mathbb R^d}\! dz \, a(z)   \mu \big( \frac{x}{\varepsilon}, \frac{x}{\varepsilon}\! -\!z \big)\! \bigg\{( u(x^\varepsilon,t) - \varepsilon z\cdot\nabla u(x^\varepsilon,t) + \varepsilon^2\! \int\limits_0^{1}\!  \nabla \nabla u(x^\varepsilon-\varepsilon zq, t)\cdot z\!\otimes\!z\, (1\!-\!q) \, dq
\\[5mm]
+\, \displaystyle\varepsilon \varkappa_1 \big(\frac{x}{\varepsilon}-z \big)\cdot \Big( \nabla u(x^\varepsilon,t)\!-\!\varepsilon  \nabla \nabla u (x^\varepsilon,t)\,z + \varepsilon^2 \int_0^{1}  \nabla \nabla \nabla u(x^\varepsilon-\varepsilon zq, t) z\!\otimes\!z (1-q) \ dq \Big) \\[4mm]
+\,\displaystyle
\varepsilon^2 \varkappa_2 \big( \frac{x}{\varepsilon}-z \big)\cdot \nabla \nabla u(x^\varepsilon-\varepsilon z,t) \ - \
u(x^\varepsilon,t)-\varepsilon \varkappa_1 \big( \frac{x}{\varepsilon} \big)\cdot \nabla u(x^\varepsilon,t) - \varepsilon^2 \varkappa_2 \big(\frac{x}{\varepsilon} \big)\cdot \nabla \nabla u(x^\varepsilon,t) \bigg\},
\end{array}
\end{equation*}
where
$$
\big\{  \nabla \nabla u(\cdot)z\big\}^i=\frac{\partial^2 u}{\partial x^i\partial x^j}(\cdot) z^j
\quad\hbox{and}\quad
\big\{\nabla \nabla \nabla u(\cdot) z\!\otimes\!z\big\}^i =\frac{\partial^3 u}{\partial x^i\partial x^j\partial x^k}(\cdot) z^jz^k.
$$
 Collecting power-like terms in the last relation
we obtain
\begin{equation}\label{K2_1}
\begin{array}{ll}
 (L^{\varepsilon}w^{\varepsilon})(x,t) \\[1.6mm]
\displaystyle
=\, \frac{1}{\varepsilon} \nabla u(x^\varepsilon,t)\! \cdot\! \int\limits_{\mathbb R^d}  \Big\{ -z + \varkappa_1 \big(\frac{x}{\varepsilon}-z \big) - \varkappa_1 \big(\frac{x}{\varepsilon}\big) \Big\}  a (z) \mu \big( \frac{x}{\varepsilon}, \frac{x}{\varepsilon} -z \big) \, dz
\\[1mm]
\displaystyle
 +\, \nabla \nabla u (x^\varepsilon,t)\!\cdot\!\!  \int\limits_{\mathbb R^d}\! \Big\{ \frac12 z\!\otimes\!z\! - z \!\otimes\!\varkappa_1 \big(\frac{x}{\varepsilon}\!-\!z \big) + \varkappa_2 \big( \frac{x}{\varepsilon}\! -\! z \big) \!- \varkappa_2 \big(\frac{x}{\varepsilon}\big)  \Big\} \  a (z) \mu \big(\frac{x}{\varepsilon}, \frac{x}{\varepsilon}\! -\!z \big) \, dz
\\[0.5mm]
 +\, \ \phi^{(L)}_\varepsilon (x,t)\\
\end{array}
\end{equation}
with
\begin{equation}\label{14}
\begin{array}{rl}
\phi^{(L)}_\varepsilon (x,t) \!\!&\displaystyle =\: \frac1{\varepsilon^{2}}\! \int\limits_{\mathbb R^d}\! dz \, a (z) \mu \big( \frac{x}{\varepsilon}, \frac{x}{\varepsilon}\! -\!z \big)
\bigg\{ \varepsilon^2\! \int\limits_0^{1}  \nabla \nabla u(x^\varepsilon-\varepsilon zq, t) \!\cdot\! z\!\otimes\!z \,(1-q) \ dq
\\[4mm]  & \displaystyle
 - \frac{\varepsilon^2}2 \nabla \nabla u(x^\varepsilon,t)\!\cdot\!  z\!\otimes\!z \
+\, \varepsilon^3 \varkappa_1 \big(\frac{x}{\varepsilon}\!-\!z \big)\!\cdot\! \int\limits_0^{1}\!  \nabla \nabla \nabla u(x^\varepsilon\!-\!\varepsilon zq, t) z\!\otimes\!z (1\!-\!q) \, dq  \,
\\[4mm]  & \displaystyle
- \, \varepsilon^3 \varkappa_2 \big(\frac{x}{\varepsilon}\!-\!z \big) \!\cdot\! \int\limits_0^{1}\!  \nabla \nabla \nabla u(x^\varepsilon\!-\!\varepsilon zq, t)z  \, dq\!  \bigg\}.
\end{array}
\end{equation}
Thus the remainder term $\phi^\varepsilon$ is the sum
\begin{equation}\label{fiplus}
\phi^\varepsilon \ = \ \phi^{(0)}_\varepsilon \ + \ \phi^{(L)}_\varepsilon.
\end{equation}


\begin{proposition}\label{fi_0} Let $u \in C^{\infty}\big( (0,T), {\cal{S}}(\mathbb R^d) \big)$
Then for the functions $ \phi^{(0)}_\varepsilon$ and $ \phi^{(L)}_\varepsilon$ given by \eqref{Aw-t-r} and \eqref{14} we have
\begin{equation}\label{fi}
\| \phi^{(L)}_\varepsilon \|\big._{\infty} \ \to \ 0 \quad \mbox{ and } \quad \| \phi^{(0)}_\varepsilon \|\big._{\infty} \ \to \ 0 \quad  \mbox{ as } \; \varepsilon \to 0,
\end{equation}
where $\| \cdot \|\big._{\infty}$ is the norm in $ L^{\infty}\big( (0,T), L^2 (\mathbb R^d) \big)$.
\end{proposition}

\begin{proof}
The convergence \eqref{fi} for $ \phi^{(0)}_\varepsilon$ immediately follows from the representation \eqref{Aw-t-r} for this function.
For the function  $ \phi^{(L)}_\varepsilon$, the proof is completely analogous to the proof of Proposition 5 in \cite{PZh}.
\end{proof}

\section{First corrector $\varkappa_1$ and drift $b$}

Our next step of the proof deals with constructing the correctors $\varkappa_1$ and $\varkappa_2$.
Denote $\xi=\frac{x}{\varepsilon}$ a variable on the period: $\xi \in \mathbb{T}^d = [0,1]^d$, then $ \mu(\xi,\eta), \varkappa^i_1(\xi), \varkappa^{ij}_2(\xi), \; i,j = 1, \ldots, d,$ are functions on $\mathbb T^d$.
We collect all the terms of the order $\varepsilon^{-1}$ in \eqref{Aw-t} and (\ref{K2_1}), and then equate them to 0. This yields the following equation for the vector function $\varkappa_1(\xi) = \{ \varkappa_1^i (\xi) \}, \ \xi \in \mathbb T^d, \; i=1, \ldots, d,$ as unknown function and for the unknown vector $b = \{ b^i \} \in \mathbb R^d$:
\begin{equation}\label{K1_2}
\int\limits_{\mathbb R^d}  \Big(  -z^i + \varkappa^i_1 (\xi-z) - \varkappa^i_1 (\xi) \Big) \ a (z) \mu (\xi, \xi -z ) \ dz +  b^i \ = \  0 \quad \forall \ i = 1, \ldots, d.
\end{equation}
Here and in what follows $\varkappa_1(q), \ q \in \mathbb R^d$, is the periodic extension of  $\varkappa_1(\xi), \ \xi \in \mathbb T^d$. Notice that (\ref{K1_2}) is a system of uncoupled equations.
After change of variables  $q=\xi-z \in \mathbb R^d$ equation (\ref{K1_2}) can be written in the vector form as follows
\begin{equation}\label{kappa_1bis}
\int\limits_{\mathbb R^d}  a (\xi-q) \mu (\xi, q)  (\varkappa_1 (q) - \varkappa_1 (\xi) ) \ dq \ = \ \int\limits_{\mathbb R^d}  a (\xi-q)  \mu (\xi, q ) (\xi - q ) \ dq \ - \  b,
\end{equation}
or
\begin{equation}\label{kappa_1}
A \varkappa_1 \ = \ h \ = \ f \ - \  b
\end{equation}
with the operator $A$ in $(L^2(\mathbb T^d))^d$ defined by
\begin{equation}\label{Akappa}
(A \bar\varphi) (\xi) \ = \ \int\limits_{\mathbb R^d}  a (\xi-q) \mu (\xi, q)  (\bar\varphi (q) - \bar\varphi (\xi) ) \ dq \ = \ \int\limits_{\mathbb T^d}  \hat a (\xi-\eta) \mu (\xi, \eta)  (\bar\varphi (\eta) - \bar\varphi (\xi) ) \ d\eta,
\end{equation}
where
\begin{equation}\label{hata}
\hat a(\eta) \ = \ \sum_{k \in \mathbb Z^d} a(\eta +k), \quad \eta \in \mathbb T^d,
\end{equation}
and
\begin{equation}\label{f}
 f \ = \ \int\limits_{\mathbb R^d}  a (\xi-q)  \mu (\xi, q ) (\xi - q ) \ dq.
\end{equation}
Observe that the vector function
\begin{equation}\label{fh}
h(\xi) \ = \ \int\limits_{\mathbb R^d}  a (\xi-q) \mu (\xi, q )  (\xi - q ) \ dq \ - \  b \ \in \ (L^2(\mathbb T^d))^d,
\end{equation}
because it is bounded for all $\xi \in \mathbb T^d$:
$$
\left| \int\limits_{\mathbb R^d}  a (\xi-q) (\xi - q ) \mu (\xi, q) \ dq \right| \ \le \ \alpha_2 \int\limits_{\mathbb R^d}  a (z) |z| \ dz \ < \ \infty.
$$
In \eqref{kappa_1} operator $A$ applies component-wise.  In what follows, abusing slightly the notation, we use the same notation $A$ for the scalar operator in $L^2(\mathbb T^d)$ acting on each component in \eqref{kappa_1}.

Let us denote
$$
K \varphi (\xi)=   \int\limits_{\mathbb R^d}  a (\xi-q) \mu (\xi, q) \varphi (q)  \, dq,\qquad
\varphi \in L^2(\mathbb T^d).
$$

\begin{proposition}[\cite{PZh}]\label{compact}
The operator
\begin{equation}\label{Kkappa}
K \varphi (\xi) \ = \  \int\limits_{\mathbb R^d}  a (\xi-q) \mu (\xi, q) \varphi (q)  \ dq \ = \   \int\limits_{\mathbb T^d}  \hat a (\xi-\eta) \mu (\xi, \eta) \varphi(\eta) \ d\eta, \quad  \varphi \in L^2(\mathbb T^d),
\end{equation}
is a compact operator in $L^2 (\mathbb T^d)$.
\end{proposition}

The proof see in \cite{PZh}.

\medskip

The operator
\begin{equation}\label{Gkappa}
G \varphi (\xi) \ = \  \varphi (\xi) \ \int\limits_{\mathbb R^d}  a (\xi-q) \mu (\xi, q)  \ dq \ = \ \varphi (\xi)  \int\limits_{\mathbb T^d}  \hat a (\xi-\eta) \mu (\xi, \eta)  \ d\eta, \quad \varphi \in L^2 (\mathbb T^d),
\end{equation}
is the operator of multiplication by the function $G(\xi) = \int\limits_{\mathbb R^d}  a (\xi-q) \mu (\xi, q)  \ dq$. Observe that
\begin{equation}\label{G}
0 < g_1 \le G(\xi) \le g_2 <\infty.
\end{equation}
Thus, the operator $A$ in (\ref{Akappa}) can be written as $A=K-G$, where $G$ and $K$ were defined in (\ref{Gkappa}) and (\ref{Kkappa}). Therefore $-A$ is the sum of a positive invertible operator $G$ and a compact operator $-K$, and the Fredholm theorem applies to (\ref{kappa_1}).

It will be shown in the next section that $\lambda=1$ is a simple eigenvalue of the operator $(G^{-1} K)^\ast$  in $L^2(\mathbb T^d)$.
Denote the corresponding eigenfunction by $\psi_0$.
It is easy to see that the kernel of $(G-K)^\ast$ has dimension one and that
\begin{equation}\label{Ker}
\mathrm{Ker} \, (G-K)^\ast = G^{-1}(\xi) \psi_0(\xi) =: v_0(\xi).
\end{equation}
Indeed,
$$
(G-K)^\ast v_0 = \big( G(E - G^{-1} K) \big)^\ast G^{-1} \psi_0 = \big( E - (G^{-1} K)^\ast \big) \psi_0 = 0.
$$
Then the solvability condition for the equation in  (\ref{kappa_1}) reads:
\begin{equation}\label{FA1}
\int\limits_{\mathbb T^d} h(\xi) v_0(\xi) \ d \xi \  = \ \int\limits_{\mathbb R^d} \int\limits_{\mathbb T^d}  a (\xi-q) \mu (\xi, q )  (\xi - q ) \ dq \, v_0 (\xi) d \xi \ - \  b \int\limits_{\mathbb T^d} \, v_0 (\xi) d \xi \ =  \ 0.
\end{equation}
Thus taking the normalized $v_0$ with $\int_{\mathbb T^d} v_0(\xi)d \xi =1$ and choosing $ b$ in the following way
\begin{equation}\label{FA1}
 b \ = \  \int\limits_{\mathbb R^d}  \int\limits_{\mathbb T^d} a (\xi-q) \mu (\xi, q )  (\xi - q ) \ dq \, v_0 (\xi) d \xi,
\end{equation}
we conclude that  the equation in \eqref{kappa_1bis} has a unique (up to a constant vector) solution $\varkappa_1 \in  (L^2 (\mathbb T^d))^d$.

The properties of the functions $\psi_0$ and $v_0$ are discussed in the next section.

\section{Ground state}

\begin{lemma}\label{GS}
The operator $(G^{-1} K)^\ast$ is compact in $L^2 (\mathbb T^d)$ and has a simple eigenvalue at $\lambda=1$.
The corresponding eigenfunction $\psi_0 \in L^2 (\mathbb T^d)$
satisfies the equation
\begin{equation}\label{GSequality}
(G^{-1} K)^\ast \psi_0 \ = \ \psi_0
\end{equation}
and admits the following estimates:
\begin{equation}\label{GSbound}
0< \gamma_1 \le \psi_0 (\xi) \le \gamma_2 < \infty \quad \mbox{\rm for all } \; \xi \in  \mathbb{T}^d,
\end{equation}
here  $\gamma_1>0$ and $\gamma_2$ are positive constants.
\end{lemma}

\begin{proof}
The compactness of $G^{-1}K$ is an immediate consequence of Proposition \ref{compact} and estimate  \eqref{G}.
The operator $A=K-G$ has an eigenfunction $\varphi_0(\xi) \equiv 1$ with the eigenvalue $\lambda=0$. Thus $\varphi_0(\xi) \equiv 1$ is also an eigenfunction of the operator $G^{-1}K$ that corresponds to the eigenvalue $\lambda=1$.
It is clear that  $G^{-1}K$ is a positive operator, that is it maps the set of non-negative $L^2(\mathbb T^d)$ functions
into itself.
Moreover, we will now prove that $G^{-1}K$ is a positivity improving operator, i.e. there exists $N \in \mathbb{N}$
such that
\begin{equation}\label{improving}
 f(\xi) \ge 0 \; \mbox{ implies } \;  (G^{-1}K)^N f(\xi)>0 \; \mbox{ for all } \; \xi \in \mathbb{T}^d.
\end{equation}
Due to representation \eqref{Kkappa} of the operator $K$ property \eqref{improving}  is a straightforward consequence of the following lemma.

\begin{lemma}\label{2conv}
There exist $N \in \mathbb{N}$ and $\gamma_0 >0$ such that
\begin{equation}\label{astN}
\hat a^{\ast N} (\xi) \ge \gamma_0 \quad \forall \xi \in \mathbb T^d,
\end{equation}
where the symbol $\ast$ stands for the convolution on the torus $\mathbb T^d$.
\end{lemma}

\begin{proof} For proving \eqref{astN} it is sufficient to show that
for any non-negative $a \in L^1 (\mathbb{R}^d)$:
\begin{equation}\label{L2conv-a}
a(z) \ge 0, \quad \int_{\mathbb{R}^d} a(z) \, dz =1,
\end{equation}
there exist $\gamma>0$ and a ball $B_\delta \in \mathbb{R}^d$ of a radius $\delta>0$ such that
\begin{equation}\label{L2conv}
(a \ast a)(z) > \gamma \quad \forall \; x \in B_\delta.
\end{equation}

The Lebesgue differentiation theorem states that, given any $f\in L^{1}(\mathbb{R}^{d})$, almost every $x$ is a Lebesgue point of $f$, i.e.
\begin{equation}\label{LP}
\lim _{{r\to 0^{+}}}{\frac  {1}{|B_r(x)|}}\int\limits_{{B_r(x)}}\!|f(y)-f(x)|\,d y = 0,
\end{equation}
where $B_r(x)$ is a ball centered at $x$  with radius $r>0$,  $|B_r(x)|$ is its Lebesgue measure.
Condition \eqref{L2conv-a} implies that there exists the Lebesgue point $x_0$ such that $a(x_0)= \alpha>0$. We assume without loss of generality that $x_0 = 0$.

\begin{proposition}
For any $\varepsilon>0$ there exists $\delta_0>0$ such that for any $\delta < \delta_0$:
\begin{equation}\label{Prop}
\mu \big\{ y \in B_\delta (0): \ a(y) > \frac{\alpha}{2} \big\} \ge (1 - \varepsilon) \, |B_\delta(0)|.
\end{equation}
\end{proposition}
\begin{proof}
Using inclusion
$$
\big\{  y \in B_\delta (0): \ a(y) < \frac{\alpha}{2} \big\} \subset \big\{ y \in B_\delta (0): \ |a(y) - a(0)| > \frac{\alpha}{2} \big\}, \quad \mbox{with } \ a(0)=\alpha,
$$
valid for any $\delta>0$, the Chebyshev inequality
$$
\mu \big\{ y \in B_\delta (0): \ |a(y) - a(0)| > \frac{\alpha}{2} \big\} \le \frac{2}{\alpha} \int\limits_{{B_\delta(0)}}\!|a(y)-a(0)|\,d y
$$
and definition \eqref{LP} of the Lebesgue point  we get that for any $\varepsilon>0$ there exists $\delta_0>0$ such that for any $\delta < \delta_0$:
\begin{equation}\label{Prop1}
\mu \big\{ y \in B_\delta (0): \ a(y) < \frac{\alpha}{2} \big\} \le \mu \big\{ y \in B_\delta (0): \ |a(y) - a(0)| > \frac{\alpha}{2} \big \} \le \varepsilon |B_\delta(0)|.
\end{equation}
Consequently, inequality \eqref{Prop} holds.
\end{proof}

Notice that  $x-y \in B_\delta(0)$, if $x,y \in B_{\frac{\delta}{2}}(0)$. Then
it follows from \eqref{Prop1} that for any $x \in B_{\frac{\delta}{2}}(0)$ we obtain
\begin{equation}\label{Lemma-1}
\mu \big\{ y \in B_{\frac{\delta}{2}} (0): \ a(y) > \frac{\alpha}{2}, \  a(x-y) > \frac{\alpha}{2} \big\} \ge
| B_{\frac{\delta}{2}}| - 2 \mu \big\{ y \in B_{\delta}(0): \ a(y) < \frac{\alpha}{2} \big\} \ge | B_{\frac{\delta}{2}}| - 2\varepsilon |B_\delta|.
\end{equation}
Choosing $\varepsilon = 2^{-(d+2)}$ and the corresponding $\delta = \delta(\varepsilon)$ we get from \eqref{Lemma-1} the following estimate which is valid for all $x \in B_{\frac{\delta}{2}}(0)$ with $\delta = \delta(\varepsilon)$:
\begin{equation}\label{Lemma-2}
\mu \big\{ y \in B_{\frac{\delta}{2}} (0): \ a(y) > \frac{\alpha}{2}, \  a(x-y) > \frac{\alpha}{2} \big\} \ge
\frac12 | B_{\frac{\delta}{2}}|.
\end{equation}
Finally we have for all $x \in B_{\frac{\delta}{2}}(0)$:
$$
(a \ast a) (x) = \int\limits_{\mathbb{R}^d} a(x-y) a(y) dy \ge \int\limits_{|y|< \frac{\delta}{2}, \ a(y) > \frac{\alpha}{2}, \ a(x-y) > \frac{\alpha}{2}} a(x-y) a(y) dy \ge \frac{\alpha^2}{8} | B_{\frac{\delta}{2}} |,
$$
which implies \eqref{L2conv}.  Since $\widehat{a^{\ast N}}(\cdot)=\hat a^{\star N}$, the inequality \eqref{astN} follows, and the proof of Lemma \ref{2conv} is completed.
\end{proof}

From Lemma \ref{2conv} it follows by the Krein-Rutman theorem (\cite{KR}, Theorem 6.2) that the operator $(G^{-1}K)^\ast$ has the maximal eigenvalue with the corresponding positive eigenfunction $\psi_0 >0$ (the ground state). As we have already noticed in the beginning of the proof of Lemma \ref{GS} the maximal eigenvalue of the operator $G^{-1}K$ is equal to 1. Consequently, the maximal eigenvalue of $(G^{-1}K)^\ast$ is also 1. The uniqueness of the ground state $\psi_0$ of the operator $(G^{-1}K)^\ast$ in the space $L^2({\mathbb T}^d)$ follows from the positivity improving property \eqref{improving}, see e.g.  \cite{KR}, Section 6. 

Thus we have proved the existence and uniqueness of $\psi_0>0, \ \psi_0 \in L^2({\mathbb T}^d)$ that satisfies  \eqref{GSequality}.  In particular,
$$
\|\psi_0\|_{L^1(\mathbb{T}^d)} = \int\limits_{\mathbb{T}^d} \psi_0(\xi) d \xi >0.
$$
Next we turn to the bounds in \eqref{GSbound}.
Estimates \eqref{G} and \eqref{astN} imply the bound from below:
\begin{equation}\label{B-below}
\psi_0 (\xi) = \big((G^{-1} K)^\ast \big)^N \psi_0 (\xi) \ge (g_2^{-1} \alpha_1)^N \gamma_0 \int\limits_{\mathbb{T}^d} \psi_0(\eta) d \eta = (g_2^{-1} \alpha_1)^N \gamma_0 \|\psi_0\|_{L^1(\mathbb{T}^d)} \quad \forall \; \xi \in \mathbb{T}^d,
\end{equation}
where $ 0<\|\psi_0\|_{L^1(\mathbb{T}^d)} \le  \|\psi_0\|_{L^2(\mathbb{T}^d)}$.
The upper bound follows from \eqref{M1} and \eqref{GSequality}:
\begin{equation}\label{B-upper}
\max\limits_{\xi}  \psi_0 (\xi) \le \max\limits_{\xi}  \Big| \int\limits_{\mathbb{T}^d} \hat a(\eta-\xi) \mu(\eta,\xi) G^{-1}(\eta)\psi_0(\eta) d \eta \Big| \le \alpha_2 g_1^{-1} \| \psi_0 \|_{L^2(\mathbb{T}^d)} \| \hat a \|_{L^2(\mathbb{T}^d)}.
\end{equation}
The proof of Lemma \ref{GS} is completed.
\end{proof} 

\begin{corollary}\label{Corolv0}
There exists a unique (up to an additive constant) function $v_0 \in L^2( \mathbb{T}^d)$ satisfying
\begin{equation}\label{v0}
\int\limits_{\mathbb R^d}  a (q-\xi) \mu (q, \xi) v_0 (q)  \ dq \ = \  v_0(\xi) \, \int\limits_{\mathbb R^d} a (\xi-q) \mu (\xi, q) dq, \end{equation}
i.e. $  \mathrm{span}(v_0)  =  \mathrm{Ker} \, (G-K)^\ast $.
This function obeys the following lower and upper bounds:
\begin{equation}\label{v0bound}
0< \tilde \gamma_1 \le v_0 (\xi) \le \tilde \gamma_2 < \infty \quad \mbox{ for all } \; \xi \in  \mathbb{T}^d.
\end{equation}
\end{corollary}

 \section{Second corrector $\varkappa_2$ and  effective matrix $\Theta$.}

We collect now all the terms of the order $\varepsilon^{0}$ in \eqref{Aw-t} and (\ref{K2_1}), and then equate them to the main term on the right-hand side of \eqref{mle}:
$$
\frac{\partial u}{\partial t}(x^\varepsilon,t) -
 \Theta \cdot \nabla \nabla u(x^\varepsilon,t).
$$
Notice that time derivatives $\frac{\partial u}{\partial t} (x^\varepsilon, t)$ are mutually cancelled on both sides of this relation, and we obtain an equation for the unknown matrix function $\varkappa_2(\xi) = \{ \varkappa_2^{ij} (\xi) \}, \ \xi \in \mathbb T^d, \; i,j=1, \ldots, d,$ and  the constant matrix $\Theta = \{ \Theta^{ij} \}$. This equation reads
\begin{equation}\label{K2}
\int\limits_{\mathbb R^d} a (z) \mu (\xi, \xi -z ) (\varkappa^{ij}_2 (\xi-z) - \varkappa^{ij}_2 (\xi) )  dz  +  b^i \varkappa_1^j(\xi)  +  \int\limits_{\mathbb R^d} a (z) \mu (\xi, \xi -z ) \big(\frac12 z^i z^j  - z^i \varkappa^{j}_1 (\xi-z)  \big)  dz\ = \  \Theta^{ij}.
\end{equation}
Notice that (\ref{K2}) is again a system of uncoupled equations.
After change of variables  $q=\xi-z \in \mathbb R^d$ equation (\ref{K2}) can be written in the vector form as follows
\begin{equation}\label{kappa_2bis}
\begin{array}{l}
\displaystyle
-\int\limits_{\mathbb R^d}  a (\xi-q) \mu (\xi, q)  (\varkappa_2 (q) - \varkappa_2 (\xi) ) \ dq
\\[4mm]  \displaystyle
=  \ b\otimes \varkappa_1(\xi) \ + \ \int\limits_{\mathbb R^d}  a (\xi-q)  \mu (\xi, q ) \Big( \frac12 (\xi - q )\otimes (\xi - q ) - (\xi-q)\otimes \varkappa_1 (q) \Big) dq \ - \ \Theta,
\end{array}
\end{equation}
or
\begin{equation}\label{kappa_2}
- A \varkappa_2 (\xi)\ = \ F(\xi) - \Theta
\end{equation}
with the operator $A$ defined above in \eqref {Akappa} and the following matrix function on the right-hand side:
$$
F(\xi) = \ b\otimes \varkappa_1(\xi) + \int\limits_{\mathbb R^d}  a (\xi-q)  \mu (\xi, q ) \Big( \frac12 (\xi - q )\otimes(\xi - q) - (\xi-q)\otimes \varkappa_1 (q) \Big) dq.
$$
The equation \eqref{kappa_2} on $\varkappa_2$ has the same form as equation \eqref{kappa_1} on  $\varkappa_1$. Consequently, using the same reasoning as above we conclude that the solvability condition for \eqref{kappa_2} leads after simple rearrangements to the following formula for the  matrix $\Theta$:
\begin{equation}\label{Theta}
\begin{array}{l}
\displaystyle
\Theta^{i j} \ = \  \int\limits_{\mathbb T^d} F^{i j}(\xi) v_0(\xi) \ d\xi
\\[7mm] \displaystyle
=  \  \int\limits_{\mathbb T^d}
\int\limits_{\mathbb R^d} a (\xi-q) \mu (\xi, q ) \Big( \frac12 (\xi - q )^i (\xi - q )^j  - (\xi - q )^i \varkappa_1^j (q) \Big) v_0(\xi) \ dq \  d\xi \ + \ b^i \int\limits_{\mathbb T^d} \varkappa_1^{j}(\xi) v_0(\xi) \ d\xi
 \end{array}
\end{equation}
for any $i, j$, where $v_0 \in L^2(\mathbb T^d)$  is the normalized function from $\mathrm{Ker}(-A^\ast)$, see \eqref{Ker}.

\begin{proposition}\label{l_2}
The integrals on the right-hand side of (\ref{Theta}) converge. Moreover, the symmetric part of the matrix
$\Theta = \{ \Theta^{i j} \}$ defined in (\ref{Theta}) is positive definite.
\end{proposition}

\begin{proof}
The first statement of the Proposition immediately follows from the existence of the second moment of the function $a(z)$.
Since function  $v_0(\xi)>0$ and satisfies two-sided bounds \eqref{v0bound}, it is sufficient to prove that the symmetric part  of the right-hand side of (\ref{Theta}) is positive definite.
To prove that $\Theta$ is a positive definite matrix we consider the following integrals:
\begin{equation}\label{I}
I^{ij}  =
\int\limits_{\mathbb T^d} \!  \int\limits_{\mathbb R^d}\!  a (\xi\!-\!q) \mu (\xi, q )  \big(  (\xi - q ) + ( \varkappa_1 (\xi) - \varkappa_1 (q)) \big)^i  \big(  (\xi - q ) + ( \varkappa_1 (\xi) - \varkappa_1 (q)) \big)^j v_0(\xi)  dq  d\xi.
\end{equation}
Our aim is to show that the symmetric part of the right-hand side of (\ref{Theta}) is equal to $I$:
\begin{equation}\label{Ibis}
I^{i j} \ = \ \Theta^{i j} + \Theta^{j i}.
\end{equation}
We have
\begin{equation}\label{ThetaSym}
\begin{array}{l}
\displaystyle
\Theta^{i j} \ +  \ \Theta^{j i}
\ =  \  \int\limits_{\mathbb T^d}
\int\limits_{\mathbb R^d} a (\xi-q) \mu (\xi, q )  (\xi - q )^i (\xi - q )^j v_0 (\xi) d\xi dq
\\[7mm] \displaystyle
 - \ \int\limits_{\mathbb T^d}
\int\limits_{\mathbb R^d} a (\xi-q) \mu (\xi, q ) \big( (\xi - q )^i \varkappa_1^j (q) + (\xi - q )^j \varkappa_1^i (q) \big) v_0(\xi) \ dq \ d\xi
\\[7mm] \displaystyle
+ \ b^i \int\limits_{\mathbb T^d} \varkappa_1^{j}(\xi) v_0(\xi) \ d\xi \ + \ b^j \int\limits_{\mathbb T^d} \varkappa_1^{i}(\xi) v_0(\xi) \ d\xi.
 \end{array}
\end{equation}
Let us rewrite $I^{ij}$ as the sum
$$
I^{ij} = I^{ij}_1 + I^{ij}_2 + I^{ij}_3,
$$
where
\begin{equation}\label{I1}
I^{ij}_1 \ = \
\int\limits_{\mathbb T^d} \!  \int\limits_{\mathbb R^d}\!  a (\xi\!-\!q) \mu (\xi, q )   (\xi - q )^i  (\xi - q )^j v_0(\xi)  dq  d\xi,
\end{equation}
\begin{equation}\label{I2}
I^{ij}_2 \ = \
\int\limits_{\mathbb T^d} \!  \int\limits_{\mathbb R^d}\!  a (\xi\!-\!q) \mu (\xi, q )  \big( (\xi - q )^i  ( \varkappa_1 (\xi) - \varkappa_1 (q))^j +  ( \varkappa_1 (\xi) - \varkappa_1 (q))^i (\xi - q )^j \big) v_0(\xi)  dq  d\xi,
\end{equation}
\begin{equation}\label{I3}
I^{ij}_3 \ = \
\int\limits_{\mathbb T^d} \!  \int\limits_{\mathbb R^d}\!  a (\xi\!-\!q) \mu (\xi, q )  ( \varkappa_1 (\xi) - \varkappa_1 (q))^i
( \varkappa_1 (\xi) - \varkappa_1 (q))^j  v_0(\xi)  dq  d\xi.
\end{equation}
Then $I^{ij}_1$ coincides with the first integral in \eqref{ThetaSym}. Let us rewrite the integral in  $I^{ij}_2$ as follows:
\begin{equation}\label{I2-1}
\begin{array}{l}
\displaystyle
I^{ij}_2 \ = \
\int\limits_{\mathbb T^d} \!  \int\limits_{\mathbb R^d}\!  a (\xi\!-\!q) \mu (\xi, q )  \big( (\xi - q )^i  \varkappa^j_1 (\xi) + (\xi - q )^j \varkappa^i_1 (\xi) \big) v_0(\xi)  dq  d\xi
\\[7mm] \displaystyle
- \ \int\limits_{\mathbb T^d} \!  \int\limits_{\mathbb R^d}\!  a (\xi\!-\!q) \mu (\xi, q )  \big( (\xi - q )^i  \varkappa^j_1 (q) + (\xi - q )^j \varkappa^i_1 (q) \big) v_0(\xi)  dq  d\xi \ = \ \tilde J^{ij}_2 \ + \  J^{ij}_2.
 \end{array}
\end{equation}
Then $ J^{ij}_2$ coincides with the second integral in \eqref{ThetaSym}. Further we rearrange the integral $\tilde J^{ij}_2$ using \eqref{kappa_1bis} and \eqref{kappa_1} and recalling the definition of the function $f$ in \eqref{f}:
\begin{equation}\label{J2}
\begin{array}{l}
\displaystyle
\tilde J^{ij}_2 \ = \
\int\limits_{\mathbb T^d} \!  f^i(\xi)  \varkappa^j_1 (\xi) v_0 (\xi) d\xi \  + \  \int\limits_{\mathbb T^d} \!  f^j(\xi)  \varkappa^i_1 (\xi) v_0 (\xi) d\xi
\\[7mm] \displaystyle
= \ \int\limits_{\mathbb T^d}   \varkappa^j_1 (\xi) v_0(\xi) \big(b^i + A \varkappa^i_1 (\xi)  \big) d\xi \ + \ \int\limits_{\mathbb T^d}   \varkappa^i_1 (\xi) v_0(\xi) \big(b^j + A \varkappa^j_1 (\xi)  \big) d\xi
\\[7mm] \displaystyle
= \ b^i \int\limits_{\mathbb T^d}   \varkappa^j_1 (\xi) v_0(\xi)  d\xi \ + \ b^j \int\limits_{\mathbb T^d}   \varkappa^i_1 (\xi) v_0(\xi)  d\xi
\\[7mm] \displaystyle
 + \  \int\limits_{\mathbb T^d}   \varkappa^j_1 (\xi) v_0(\xi)  A \varkappa^i_1 (\xi)   d\xi \ + \ \int\limits_{\mathbb T^d}   \varkappa^i_1 (\xi) v_0(\xi)  A \varkappa^j_1 (\xi)   d\xi.
 \end{array}
\end{equation}
Denote
\begin{equation}\label{D2}
D^{ij}_2 = \ b^i \int\limits_{\mathbb T^d}   \varkappa^j_1 (\xi) v_0(\xi)  d\xi \ + \ b^j \int\limits_{\mathbb T^d}   \varkappa^i_1 (\xi) v_0(\xi)  d\xi,
\end{equation}
\begin{equation}\label{D2bis}
\tilde D^{ij}_2 = \int\limits_{\mathbb T^d}   \varkappa^j_1 (\xi) v_0(\xi)  A \varkappa^i_1 (\xi)   d\xi \ + \ \int\limits_{\mathbb T^d}   \varkappa^i_1 (\xi) v_0(\xi)  A \varkappa^j_1 (\xi)   d\xi.
\end{equation}
Then $D^{ij}_2$ coincides with the third integral in \eqref{ThetaSym}.

We have to show that $I^{ij}_3 = - \tilde D^{ij}_2$. We have
\begin{equation}\label{I3-1}
\begin{array}{l}
\displaystyle
I^{ij}_3 \ = \
\int\limits_{\mathbb T^d} \!  \int\limits_{\mathbb R^d}\!  a (\xi\!-\!q) \mu (\xi, q )  ( \varkappa_1 (\xi) - \varkappa_1 (q))^i \varkappa^j_1 (\xi)  v_0(\xi)  dq  d\xi
\\[7mm] \displaystyle
- \ \int\limits_{\mathbb T^d} \!  \int\limits_{\mathbb R^d}\!  a (\xi\!-\!q) \mu (\xi, q )  ( \varkappa_1 (\xi) - \varkappa_1 (q))^i \varkappa^j_1 (q)  v_0(\xi)  dq  d\xi
\\[7mm] \displaystyle
= \ - \int\limits_{\mathbb T^d}   A \varkappa^i_1 (\xi) \varkappa^j_1 (\xi) v_0(\xi)    d\xi \ + \  J^{ij}_3.
 \end{array}
\end{equation}
We rearrange $J^{ij}_3$ using \eqref{v0}:
\begin{equation}\label{I3-2}
\begin{array}{l}
\displaystyle
J^{ij}_3 \ = \
- \int\limits_{\mathbb T^d} \!  \int\limits_{\mathbb R^d}\!  a (\xi\!-\!q) \mu (\xi, q )  ( \varkappa_1 (\xi) - \varkappa_1 (q))^i \varkappa^j_1 (q)  v_0(\xi)  dq  d\xi
\\[7mm] \displaystyle
= \  \int\limits_{\mathbb T^d}  \int\limits_{\mathbb T^d}\!  \hat a (\xi\!-\!q) \mu (\xi,q )  ( \varkappa_1 (q) - \varkappa_1 (\xi))^i \varkappa^j_1 (q)  v_0(\xi)  dq  d\xi
\\[7mm] \displaystyle
= \  \int\limits_{\mathbb T^d}  \int\limits_{\mathbb T^d}\! \hat a (q\!-\xi\!) \mu (q,\xi )  ( \varkappa_1 (\xi) - \varkappa_1 (q))^i \varkappa^j_1 (\xi)  v_0(q)  dq  d\xi
\\[7mm] \displaystyle
= \  \int\limits_{\mathbb T^d}  \int\limits_{\mathbb T^d}\! \hat a (q\!-\xi\!) \mu (q,\xi ) v_0(q) dq \ \varkappa^i_1 (\xi) \varkappa^j_1 (\xi)    d\xi -  \int\limits_{\mathbb T^d}  \int\limits_{\mathbb T^d}\!  a (q\!-\xi\!) \mu (q,\xi ) v_0(q) \varkappa_1 (q)^i \varkappa^j_1 (\xi) \ dq   d\xi
\\[7mm] \displaystyle
= \  \int\limits_{\mathbb T^d}  \int\limits_{\mathbb T^d}\! \hat a (\xi\!-\!q) \mu (\xi,q ) \varkappa^j_1 (\xi)  dq \ v_0(\xi) \varkappa^i_1 (\xi)    d\xi -  \int\limits_{\mathbb T^d}  \int\limits_{\mathbb T^d}\! \hat a (\xi\!-\!q) \mu (\xi,q ) \varkappa^j_1 (q) dq \ v_0(\xi) \varkappa^i_1 (\xi)   d\xi
\\[7mm] \displaystyle
= \ - \int\limits_{\mathbb T^d}   A \varkappa^j_1 (\xi) v_0(\xi)  \varkappa^i_1 (\xi)  d\xi.
 \end{array}
\end{equation}
Thus $I^{ij}_3 = - \tilde D^{ij}_2$ and this relation complete the proof of equality \eqref{Ibis}.

The structure of (\ref{I}) implies that $(Ir,r) \ge 0, \; \forall \ r \in \mathbb R^d$, and moreover $(Ir,r)>0$ since $v_0>0$ and $\varkappa_1(q)$ is the periodic function while $q$ is the linear function, consequently
$\big[ \big( (\xi - q ) + ( \varkappa_1 (\xi) - \varkappa_1 (q))\big)\cdot r \big]^2 $ can not be identically 0 if $r\not=0$.
\end{proof}

Thus, the Lemma \ref{ml} is now completely proved.
\end{proof}

\section{A priori estimates}

Let  $u^0(x,t)$ be a solution of \eqref{PE} with $u^0(x,0) = \varphi \in {\cal{S}}(\mathbb{R}^d)$. Then $ u^0(x,t) \in C^{\infty}((0,T),{\cal{S}}(\mathbb R^d))$ for any $T$ and we can define approximation $w^\varepsilon$ of $u^0$ substituting
$u^0(\cdot)$ for $u(\cdot)$ in \eqref{w_eps}. It follows from Lemma \ref{ml}  that $w^\varepsilon$ satisfies the following equation
\begin{equation}\label{w_eps_bis}
\frac{\partial w^{\varepsilon}}{\partial t} -  L^{\varepsilon} w^{\varepsilon} =
\frac{\partial u^0}{\partial t}(x^\varepsilon,t) -
 \Theta \cdot \nabla \nabla u^0(x^\varepsilon,t) \ + \ \phi^\varepsilon(x^\varepsilon,t) =
\phi^\varepsilon(x^\varepsilon,t), \quad w^{\varepsilon}(x,0) = \varphi(x) + \psi^\varepsilon(x)
\end{equation}
where $x^\varepsilon = x - \frac{b}{\varepsilon}\, t$, and
$$
\psi^\varepsilon (x) \ = \ \varepsilon \varkappa_1 (\frac{x}{\varepsilon})\cdot \nabla \varphi(x) + \varepsilon^2 \varkappa_2 (\frac{x}{\varepsilon})\cdot \nabla \nabla \varphi (x) \ \in \ L^2(\mathbb{R}^d).
$$
Consequently, the difference  $v^\varepsilon(x,t) = w^\varepsilon(x,t) - u^\varepsilon(x,t)$, where $u^\varepsilon$ is the solution of \eqref{PE_eps}, satisfies the following problem:
\begin{equation}\label{v_eps_bis}
\frac{\partial v^{\varepsilon}(x,t)}{\partial t} -  L^{\varepsilon} v^{\varepsilon}(x,t) = \phi^\varepsilon(x^\varepsilon,t), \quad v^{\varepsilon}(x,0) =  \psi^\varepsilon(x).
\end{equation}
Notice that by \eqref{fiplus} and Proposition \ref{fi_0} we have $\| \psi^\varepsilon \|_{L^2(\mathbb{R}^d)} = O(\varepsilon)$ and $\| \phi^\varepsilon \|_{\infty} = o(1)$, where $\|\cdot\|_{\infty} $ is the norm in $L^{\infty}((0,T),L^2(\mathbb R^d))$.
We are going to show now that the solution $v^\varepsilon$ of \eqref{v_eps_bis} tends to zero in $L^{\infty}((0,T),L^2(\mathbb R^d))$ as
$\eps\to0$.

\begin{proposition}\label{Proposition_v}
 Let $v^\varepsilon$ be the solution of \eqref{v_eps_bis} with  small $\psi^\varepsilon$ and $\phi^\varepsilon$:
$$
\| \phi^\varepsilon \|_{\infty} = o(1), \quad \| \psi^\varepsilon \|_{L^2(\mathbb{R}^d)} = O(\varepsilon) \quad \mbox{as } \; \varepsilon \to 0.
$$
Then
\begin{equation}\label{v_epsto0}
\| v^\varepsilon \|_{\infty} \ \to \ 0 \quad \mbox{as } \; \varepsilon \to 0.
\end{equation}
\end{proposition}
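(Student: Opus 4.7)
The plan is to carry out an energy estimate on $v^\varepsilon$; since $L^\varepsilon$ is non-symmetric on $L^2(\mathbb R^d)$, I first pass to a weighted $L^2$ space in which the operator becomes dissipative. Let $v_0$ denote the positive periodic solution of \eqref{v0} furnished by Corollary \ref{Corolv0}, set $V_\varepsilon(x):=v_0(x/\varepsilon)$, and introduce
\[
\langle u, w\rangle_\varepsilon := \int_{\mathbb R^d} V_\varepsilon(x)\, u(x)\, w(x)\,dx.
\]
By the two-sided bounds \eqref{v0bound}, the weighted norm $\|\cdot\|_\varepsilon$ is equivalent to $\|\cdot\|_{L^2(\mathbb R^d)}$, with constants independent of $\varepsilon$.

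The key step will be to establish dissipativity, $\langle L^\varepsilon v, v\rangle_\varepsilon \le 0$ for every $v\in L^2(\mathbb R^d)$. Writing $K^\varepsilon(x,y):=\varepsilon^{-d-2} a\bigl(\tfrac{x-y}{\varepsilon}\bigr)\mu\bigl(\tfrac{x}{\varepsilon},\tfrac{y}{\varepsilon}\bigr)$ for the kernel of $L^\varepsilon$ and applying the algebraic identity $(v(y)-v(x))v(x) = -\tfrac12(v(y)-v(x))^2 + \tfrac12(v(y)^2 - v(x)^2)$ splits the weighted inner product into a manifestly non-positive quadratic form $-\tfrac12\iint V_\varepsilon(x) K^\varepsilon(x,y)(v(y)-v(x))^2\,dy\,dx$ and a remainder $\tfrac12 \int v(x)^2 R_\varepsilon(x)\,dx$, where, after swapping $x\leftrightarrow y$ in the $v(y)^2$ piece and substituting $\eta=y/\varepsilon$, $\xi=x/\varepsilon$,
\[
R_\varepsilon(x) = \int_{\mathbb R^d}\!\bigl[V_\varepsilon(y) K^\varepsilon(y,x) - V_\varepsilon(x) K^\varepsilon(x,y)\bigr] dy = \varepsilon^{-2}\!\!\int_{\mathbb R^d}\!\!\bigl[a(\eta-\xi)\mu(\eta,\xi) v_0(\eta) - v_0(\xi) a(\xi-\eta)\mu(\xi,\eta)\bigr]\,d\eta.
\]
The bracketed integrand is exactly the difference of the two sides of \eqref{v0}, so $R_\varepsilon\equiv 0$ and only the non-positive first term survives.

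Once dissipativity is in place, pairing \eqref{v_eps_bis} with $V_\varepsilon v^\varepsilon$ gives $\tfrac12\tfrac{d}{dt}\|v^\varepsilon\|_\varepsilon^2 \le \|\phi^\varepsilon(\cdot-\tfrac{b}{\varepsilon}t,t)\|_\varepsilon\|v^\varepsilon\|_\varepsilon$, whence
\[
\tfrac{d}{dt}\|v^\varepsilon(\cdot,t)\|_\varepsilon \le \|\phi^\varepsilon(\cdot-\tfrac{b}{\varepsilon}t,t)\|_\varepsilon \le C\|\phi^\varepsilon\|_\infty,
\]
the last inequality using the upper bound on $v_0$ together with translation invariance of Lebesgue measure. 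Integrating on $[0,t]$ with $t\le T$ and applying $\|\psi^\varepsilon\|_\varepsilon \le C\|\psi^\varepsilon\|_{L^2(\mathbb R^d)} = O(\varepsilon)$ at $t=0$ yields
\[
\|v^\varepsilon(\cdot,t)\|_\varepsilon \le C\|\psi^\varepsilon\|_{L^2} + CT\|\phi^\varepsilon\|_\infty = O(\varepsilon) + o(1) = o(1)
\]
uniformly in $t\in[0,T]$, and returning to the unweighted $L^2$ norm via the lower bound on $v_0$ delivers \eqref{v_epsto0}.

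The hard part is the dissipativity step: without it, the non-symmetry of $L^\varepsilon$ would leave an $O(\varepsilon^{-2})$ cross-term in the energy balance and the naive estimate would blow up. Recognizing $v_0(x/\varepsilon)$ as essentially the invariant density of the Markov jump process generated by $L^\varepsilon$, and hence that \eqref{v0} is precisely the stationarity condition annihilating the cross-term, is the crucial observation; the remaining argument is a routine Gronwall-type calculation.
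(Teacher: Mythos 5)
Your proof is correct and follows essentially the same route as the paper: both rest on the weighted inner product with weight $v_0(x/\varepsilon)$ and on the dissipativity $\langle L^\varepsilon v, v\rangle_\varepsilon\le 0$, which the paper obtains by the same symmetrization computation (it refers back to the $I_3$ manipulation in \eqref{I3-1}--\eqref{I3-2}, where the cross-term is killed precisely by the invariant-density equation \eqref{v0}). The only cosmetic differences are that the paper splits \eqref{v_eps_bis} into a homogeneous problem with data $\psi^\varepsilon$ and an inhomogeneous one with zero data and then applies Cauchy--Schwarz to the integrated energy identity, whereas you treat both at once via a Gronwall-type differential inequality.
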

\begin{proof}
Since problem \eqref{v_eps_bis} is linear, we consider separately two problems:
\begin{equation}\label{v_1bis}
\frac{\partial v_\psi}{\partial t} -  L^{\varepsilon} v_\psi = 0, \quad v_\psi(x,0) =  \psi(x),
\end{equation}
\begin{equation}\label{v_2bis}
\frac{\partial v_\phi}{\partial t} -  L^{\varepsilon} v_\phi = \phi, \quad v_\phi(x,0) =  0,
\end{equation}
and prove that $\| v_\psi \|_{\infty} \le C_1 \| \psi \|_{L^2(\mathbb{R}^d)}$ 
 and
 $\| v_\phi \|^2_{\infty} \le C_2 \| \phi \|_{\infty}$
with some constants $C_1,\, C_2$ that do not depend on $\eps$, however might depend on $T$.
This immediately implies the required relation in \eqref{v_epsto0}.

Denote $v_0^\varepsilon(x) = \tilde v_0(\frac{x}{\varepsilon})$, where $\tilde v_0$ is the periodic extension of the function $v_0 \in L^2(\mathbb{T}^d)$ defined in \eqref{Ker}, see also Corollary \ref{Corolv0}. Multiplying equation \eqref{v_1bis} by $ v_\psi(x,t) \, \tilde v_0(\frac{x}{\varepsilon})$ and integrating the resulting relation  over $t \in (0,s)$ and $x \in \mathbb{R}^d$ we have
\begin{equation}\label{apriori1}
\begin{array}{l}
\displaystyle
\int\limits_{\mathbb{R}^d} \int\limits_0^s \frac{\partial v_\psi(x,t)}{\partial t} \, v_\psi(x,t) \, dt \, v_0^\varepsilon(x) \, dx = \frac12  \int\limits_{\mathbb{R}^d}  v_\psi^2 (x,s) \, v_0^\varepsilon(x) \, dx  -  \frac12  \int\limits_{\mathbb{R}^d} \psi^2 (x) \,  \, v_0^\varepsilon(x) \, dx
\\ [7mm] \displaystyle
=  \int\limits_0^s \int\limits_{\mathbb{R}^d} v_\psi(x,t) \, v_0^\varepsilon(x) \, L^\varepsilon v_\psi (x,t)\, dx \, dt.
\end{array}
\end{equation}
All integrals in \eqref{apriori1} exist since $v_0$ is uniformly bounded, see \eqref{v0bound}. The last integral in \eqref{apriori1} can be analysed in the same way as the term $I_3$ in the proof of Proposition \ref{l_2}, 
see \eqref{I3-1} - \eqref{I3-2}. This yields
$$
\int\limits_{\mathbb{R}^d} L^\varepsilon v_\psi(x,t) \, v_\psi(x,t) \, v_0^\varepsilon(x) \, dx \ = \  (L^\varepsilon v_\psi, v_\psi)_{v_0} \ \le \ 0 \quad \mbox{for all } \; t \in [0, T],
$$
and consequently,
$$
\int\limits_{\mathbb{R}^d}  v_\psi^2 (x,s) \,  v_0^\varepsilon(x) \, dx \ \le \  \int\limits_{\mathbb{R}^d} \psi^2 (x) \,  \, v_0^\varepsilon(x) \, dx \quad \mbox{ for all } \; s \in (0,T).
$$
Using the estimates in \eqref{v0bound} for $v_0$ we conclude that
\begin{equation}\label{apriori1bis1}
\| v_\psi (\cdot, s) \|_{L^2(\mathbb{R}^d)} \ \le \ C_1 \| \psi \|_{L^2(\mathbb{R}^d)}
\end{equation}
with a constant $C_1$ which does not depend on $s \in (0,T)$. Thus
\begin{equation}\label{apriori1bis}
\| v_\psi \|_{\infty} \ \le \ C_1 \| \psi \|_{L^2(\mathbb{R}^d)}.
\end{equation}
\medskip

Using the same reasoning for the second equation \eqref{v_2bis} we obtain
\begin{equation}\label{apriori2}
\frac12 \int\limits_{\mathbb{R}^d}  v_\phi^2 (x,s) \,  v_0^\varepsilon(x) \, dx \ - \ \int\limits_0^s \int\limits_{\mathbb{R}^d} \phi (x,t) \,  v_\phi(x,t) \, v_0^\varepsilon(x) \, dx \, dt \ = \   \int\limits_0^s  (L^\varepsilon v_\phi, v_\phi)_{v_0} \, dt   \ \le \ 0.
\end{equation}
Recalling the bounds in \eqref{v0bound}, by the Schwartz inequality we derive from \eqref{apriori2} that 
\begin{equation}\label{apriori2-1}
\frac{\tilde \gamma_1}{2} \| v_\phi (\cdot, s) \|^2_{L^2(\mathbb{R}^d)} \ \le \ \frac{\tilde \gamma_2}{2}  \int\limits_0^s \| \phi (\cdot,t) \|_{L^2(\mathbb{R}^d)} \, \| v_\phi (\cdot,t) \|_{L^2(\mathbb{R}^d)} \, dt \ \le
\ \frac{\tilde \gamma_2}{2} \, s \, \| \phi \|_{\infty} \, \| v_\phi  \|_{\infty}
\end{equation}
for any $s \in (0,T)$. 
Consequently,
$$
\| v_\phi \|_{\infty} \le C_2(T) \, \| \phi \|_{\infty}.
$$
\end{proof}

\medskip

Since $\| w^\varepsilon (x,t) - u^0 (x - \frac{b}{\varepsilon} \, t, t) \|_{\infty} \to 0$  by \eqref{w_eps}, then \eqref{v_epsto0} immediately yields
 \begin{equation}\label{u_epsto0}
\| u^\varepsilon (x,t) \ - \ u^0 (x - \frac{b}{\varepsilon} \, t, t) \|_{\infty} \ \to \ 0 \quad \mbox{or } \; \| u^\varepsilon (x + \frac{b}{\varepsilon} \, t,t) \ - \ u^0 (x, t) \|_{\infty} \ \to \ 0  \quad  \mbox{as } \; \varepsilon \to 0.
\end{equation}
Thus we proved \eqref{T1} for a dense in $L^2(\mathbb{R}^d)$ set of initial data, when $\varphi \in {\cal{S}}(\mathbb R^d)$.

\medskip
We can  complete now the proof of Theorem \ref{Theorem1}. For any $\varphi \in L^2(\mathbb R^d)$ and for any $\delta>0$ there exists $ \varphi_\delta \in \mathcal{S}(\mathbb R^d)$ such that $\| \varphi - \varphi_\delta\|_{L^2(\mathbb R^d)} <\delta$.
We denote by  $ u^\varepsilon_{\delta} $ and $ u^0_{\delta} $ the solution of \eqref{PE_eps} and \eqref{PE} with initial data $\varphi_\delta$.
Since \eqref{PE} is the standard Cauchy problem for a parabolic operator with constant coefficients, its
solution admits the classical upper bound
\begin{equation}\label{F1}
 \| u^0 (x,t)\ - \ u^0_{\delta}(x,t) \|_{\infty} \ \le \|\varphi-\varphi_\delta\|_{L^2(\mathbb R^d)} \ < \delta
\end{equation}
for any $T>0$.
By the estimate in \eqref{apriori1bis1} we obtain
\begin{equation}\label{F2}
\| u^\varepsilon_\delta (x,t) \ - \ u^\varepsilon (x, t) \|_{\infty} \ \le \ C_1 \, \delta.
\end{equation}
Since the upper bounds in \eqref{F1} - \eqref{F2} are valid with an arbitrary small $\delta>0$, then \eqref{u_epsto0} - \eqref{F2} 
imply that
$$\| u^{\varepsilon}  (x + \frac{b}{\varepsilon} \, t,t) - u^0 (x,t) \|_{\infty} \to 0, \quad \mbox{as } \; \varepsilon \to 0.
$$
This completes the proof of Theorem \ref{Theorem1}.

\section{Small perturbations of symmetric kernels. Einstein relation.}

Let us assume in this section that $\mu(\xi,\eta) = \mu(\eta,\xi)$ and consider a kernel $a(z)$ satisfying \eqref{M1} - \eqref{M2} of a special form:
\begin{equation}\label{aer}
a(z) \ = \ a_{\rm sym}(z) \ + \ \boldsymbol{\ell} \cdot c(z),
\end{equation}
where $ a_{\rm sym}(-z) = a_{\rm sym}(z) $ is a symmetric function that also satisfies \eqref{M1} - \eqref{M2}, $c(z)$
is an antisymmetric vector function, that is
 $\boldsymbol{\ell} \cdot c(z) = \ell^i c^i(z)$, $c^i(-z)= - c^i(z), \, i= 1, \ldots, d$;  $c(z)$ satisfies condition   \eqref{M2},
and $\boldsymbol{\ell} \in \mathbb{R}^d$ is a constant vector of a small norm. We assume here and in the sequel summation over repeated indices.
We also consider in this section a special case of antisymmetric perturbation of the form
$$
c_{\boldsymbol{\ell}}(z)=z a_{\rm sym}(z)\omega_{\boldsymbol{\ell}}(z),
$$
where $\omega_{\boldsymbol{\ell}}(z)=\omega(|\boldsymbol{\ell}|\, |z|)$, and $\omega(s)$ is a $C_0^\infty(\mathbb R)$ function such that
$0\leq\omega(\cdot)\leq 1$, $\omega(s)=1$ for $s\in [0,\frac14]$, and  $\omega(s)=0$ for $s\geq \frac12$.
\begin{lemma}\label{ER}
Let $b(\boldsymbol{\ell}) \in \mathbb R^d$ be the effective drift vector corresponding to the problem \eqref{K1_2} with $a(z)$ given by \eqref{aer}.
Then, for small ${\boldsymbol{\ell}}$,
\begin{equation}\label{Blambda}
b^i({\boldsymbol{\ell}}) \ = \ \ell^j \, \int\limits_{\mathbb R^d}  \int\limits_{\mathbb T^d} z^i \, c^j(z) \, \mu (\xi, \xi-z ) dz d\xi \ + \ \ell^j  \int\limits_{\mathbb R^d}  \int\limits_{\mathbb T^d}
 z^i \,  a_{\rm sym} (z) \mu(\xi,\xi-z) \,  \tilde\varphi^j_{0}(\xi)   dz d\xi + O(|{\boldsymbol{\ell}}|^2),
\end{equation}
where $\tilde\varphi_0 = \{ \tilde\varphi^i_{0} \}  \in (L^2 (\mathbb T^d))^d$ is the solution of the problem
\begin{equation}\label{ER4bis}
\int\limits_{\mathbb{R}^d} a_{\rm sym} (\xi-q) \mu(\xi,q) \big( \tilde\varphi^i_{0} (q) -  \tilde\varphi^i_{0} (\xi) \big)  \, d q \ = \ 2 \, \int\limits_{\mathbb{R}^d} c^i(\xi-q) \mu(\xi,q) \, dq
\end{equation}
with  $\int\limits_{\mathbb{T}^d} \tilde\varphi^i_{0}(\eta)d\eta = 0$.

In the special case, when $c_{\boldsymbol{\ell}} (z) \ = \ z \,  a_{\rm sym}(z) \, \omega_{\boldsymbol{\ell}} (z)$  and  $b({\boldsymbol{\ell}})$ is defined by \eqref{Blambda} - \eqref{ER4bis} with $c(z) = c_{\boldsymbol{\ell}} (z)$,  we obtain the so-called Einstein relation:
\begin{equation}\label{BER}
\frac{\partial b^i (\boldsymbol{\ell})}{\partial {\ell}^j}\Big|_{\boldsymbol{\ell} = 0} \ = \  2 \Theta_{\rm sym}^{ij},
\end{equation}
where $\Theta_{\rm sym}$ is the effective matrix of  problem \eqref{PE_eps} corresponding to the symmetric kernel
$a_{\rm sym}(x-y) \mu(x,y)$.
\end{lemma}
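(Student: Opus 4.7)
The plan is to obtain \eqref{Blambda} by expanding both the invariant density $v_0$ and the drift formula \eqref{FA1} in powers of $\boldsymbol{\ell}$, and then to deduce \eqref{BER} by differentiating \eqref{Blambda} at $\boldsymbol{\ell}=0$ and matching the result against the symmetric version of the identity $I^{ij}=\Theta^{ij}+\Theta^{ji}$ from Proposition~\ref{l_2}.

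\textbf{Expansion of $v_0$.} At $\boldsymbol{\ell}=0$ the kernel $a_{\rm sym}$ is even and $\mu$ is symmetric, so $A=K-G$ is self-adjoint on $L^2(\mathbb T^d)$; hence $\mathrm{Ker}\,A^\ast(0)=\mathrm{Ker}\,A(0)=\mathrm{span}\{1\}$, and with the normalisation $\int v_0\,d\xi=1$ one has $v_0(0)\equiv 1$. Write $A(\boldsymbol{\ell})=A_{\rm sym}+\ell^j B_j$ with $(B_j\varphi)(\xi)=\int_{\mathbb R^d} c^j(\xi-q)\mu(\xi,q)(\varphi(q)-\varphi(\xi))\,dq$. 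A direct computation using antisymmetry of $c^j$ in $z$ and symmetry of $\mu$ yields $(K_{c^j})^\ast=-K_{c^j}$, whence $B_j^\ast=-K_{c^j}-G_{c^j}$. Simplicity of the zero eigenvalue of $A(0)^\ast$ (Lemma~\ref{GS}) is stable under small analytic perturbation, so $v_0(\boldsymbol{\ell})=1+\ell^j\tilde\varphi^j_0+O(|\boldsymbol{\ell}|^2)$ in $L^2(\mathbb T^d)$ with $\int_{\mathbb T^d}\tilde\varphi^j_0\,d\eta=0$. Plugging into $A(\boldsymbol{\ell})^\ast v_0=0$ and collecting the $\ell^j$--term gives
\[
A_{\rm sym}\tilde\varphi^j_0=-B_j^\ast(1)=K_{c^j}(1)+G_{c^j}=2\int_{\mathbb R^d} c^j(\xi-q)\mu(\xi,q)\,dq,
\]
which is precisely \eqref{ER4bis}. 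Solvability is ensured by $\int_{\mathbb T^d}\int_{\mathbb R^d} c^j(\xi-q)\mu(\xi,q)\,dq\,d\xi=\int_{\mathbb R^d} c^j(z)M(z)\,dz=0$, where the change of variable $z=\xi-q$ produces the average $M(z)=\int_{\mathbb T^d}\mu(\xi,\xi-z)\,d\xi$, which is even in $z$ by the symmetry of $\mu$, while $c^j(z)$ is odd.

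\textbf{Proof of \eqref{Blambda}.} Substituting $a=a_{\rm sym}+\ell^j c^j$ and the expansion of $v_0$ into \eqref{FA1} and changing variables $z=\xi-q$, the zero-order term equals $\int_{\mathbb R^d} z^i a_{\rm sym}(z)M(z)\,dz=0$ by the same parity argument, and the $\ell^j$--linear terms collect into exactly \eqref{Blambda}. The cutoff $\omega_{\boldsymbol{\ell}}$, needed below, is irrelevant for this general step: only assumptions \eqref{M1}--\eqref{M2} on $c$ are used.

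\textbf{Einstein relation.} For the special perturbation $c_{\boldsymbol{\ell}}(z)=z\,a_{\rm sym}(z)\omega_{\boldsymbol{\ell}}(z)$, the cutoff guarantees $a_{\rm sym}+\boldsymbol{\ell}\cdot c_{\boldsymbol{\ell}}\geq 0$ and preserves \eqref{M1}--\eqref{M2} for every small $\boldsymbol{\ell}$, since $|\boldsymbol{\ell}||z|\omega_{\boldsymbol{\ell}}(z)\leq\tfrac12$. Moreover $c_{\boldsymbol{\ell}}|_{\boldsymbol{\ell}=0}=z\,a_{\rm sym}(z)$, so at $\boldsymbol{\ell}=0$ the right-hand side of \eqref{ER4bis} becomes $2\int a_{\rm sym}(\xi-q)\mu(\xi,q)(\xi-q)^j dq=2f^j(\xi)$. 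Comparing with \eqref{kappa_1bis} for the symmetric problem (where $b=0$ and $A_{\rm sym}\varkappa_1=f$), and using the same zero-mean normalisation, we read off $\tilde\varphi^j_0|_{\boldsymbol{\ell}=0}=2\varkappa_1^j$. Differentiating \eqref{Blambda} at $\boldsymbol{\ell}=0$ then gives
\[
\frac{\partial b^i}{\partial\ell^j}\bigg|_{\boldsymbol{\ell}=0}=\int_{\mathbb T^d}\!\!\int_{\mathbb R^d} z^i z^j a_{\rm sym}(z)\mu(\xi,\xi-z)\,dz\,d\xi+2\int_{\mathbb T^d}\!\!\int_{\mathbb R^d} z^i a_{\rm sym}(z)\mu(\xi,\xi-z)\varkappa_1^j(\xi)\,dz\,d\xi.
\]
It remains to identify the right-hand side with $2\Theta^{ij}_{\rm sym}$. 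Specialising the decomposition \eqref{I1}--\eqref{I3-2} of Proposition~\ref{l_2} to the symmetric case ($v_0\equiv 1$, $b=0$, $\Theta_{\rm sym}^{ij}=\Theta_{\rm sym}^{ji}$) and using the swap symmetry $a_{\rm sym}(\xi-q)\mu(\xi,q)=a_{\rm sym}(q-\xi)\mu(q,\xi)$ together with $A_{\rm sym}\varkappa_1=f$, the quadratic-in-$\varkappa_1$ piece $I^{ij}_3$ collapses onto a $z$--$\varkappa_1$ cross term which combines with half of $I^{ij}_2$ to produce exactly the displayed expression. This final algebraic matching, although purely computational, is the main obstacle of the proof.
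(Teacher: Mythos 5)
Your proposal follows essentially the same route as the paper: a first-order perturbation expansion $v_0^{\boldsymbol{\ell}}=\mathbf{1}+\ell^j\tilde\varphi_0^j+O(|\boldsymbol{\ell}|^2)$ of the invariant density, the resulting cell equation \eqref{ER4bis} with solvability via the parity of $c^j$ and the symmetry of $\mu$, substitution into \eqref{FA1} to get \eqref{Blambda}, the identification $\tilde\varphi_0|_{\boldsymbol{\ell}=0}=2\varkappa_{\rm sym}$, and a final comparison with \eqref{Theta}. The only place where the paper is more careful is the $\boldsymbol{\ell}$-dependence of $c_{\boldsymbol{\ell}}$ through the cutoff: to legitimately differentiate at $\boldsymbol{\ell}=0$ one needs continuity of the coefficient in \eqref{Blambda} in $\boldsymbol{\ell}$, which the paper establishes by showing $\tilde\varphi_0^{\boldsymbol{\ell}}=2\varkappa_{\rm sym}+r_{\boldsymbol{\ell}}$ with $\|r_{\boldsymbol{\ell}}\|_{L^2}\to0$ (its estimates \eqref{eps2}--\eqref{eps6}); this is a routine consequence of the second-moment condition, but it should be stated rather than assumed.
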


\begin{remark}
Notice that the symmetric part of $2 \Theta_{\rm sym}^{ij}$ coincides with $I_{\rm sym}^{ij} = \Theta_{\rm sym}^{ij} + \Theta_{\rm sym}^{ji}$.
\end{remark}

\begin{proof}  Since the operator $K$ and the function $G$ defined in \eqref{Kkappa} and \eqref{Gkappa}, respectively,
depend on a vector parameter ${\boldsymbol{\ell}}$ smoothly, and $\lambda=1$ is a simple eigenvalue of the operator $((G(\cdot))^{-1}K)^\ast$,
then  the corresponding eigenfunction $\psi_0=\psi_0^{\boldsymbol{\ell}}\in L^2(\mathbb T^d)$ is also a smooth function of a parameter ${\boldsymbol{\ell}}$.
So is  $v_0=v_0^{\boldsymbol{\ell}}$.
Using the perturbation theory arguments we conclude that for small ${\boldsymbol{\ell}}$ the function $v_0^{\boldsymbol{\ell}} \in L^2 (\mathbb T^d)$ defined by \eqref{Ker} admits the following representation
\begin{equation}\label{vl}
v_0^{\boldsymbol{\ell}} (\xi) \ = \ \mathbf{1} \ + \ {\boldsymbol{\ell}} \, \tilde \varphi_0 (\xi) \ + \ O({|\boldsymbol{\ell}|}^2), \quad  \tilde \varphi_0  \in (L^2 (\mathbb T^d))^d,
\end{equation}
where ${\bf 1}$ stands for the function identically equal to $1$ on $\mathbb T^d$.
We used here the fact that
\begin{equation}\label{ER1}
 \mathrm{span}( \mathbf{1} ) = \mathrm{Ker} \, (G_{\rm sym} - K_{\rm sym})^\ast = \mathrm{Ker} \, (G_{\rm sym} - K_{\rm sym}),
\end{equation}
where operators $K$, $G$ are defined by \eqref{Kkappa} and \eqref{Gkappa} respectively, and we denote by $K_{\rm sym}$, $G_{\rm sym}$ the  operators related to the symmetric kernel $a_{\rm sym}(x-y)\mu(x,y)$.

Substituting \eqref{vl} in  the relation $K^\ast v^{\boldsymbol{\ell}}_0 \ = \ G v^{\boldsymbol{\ell}}_0$  we obtain
\begin{equation}\label{ER2}
\begin{array}{l}
\displaystyle
\int\limits_{\mathbb{R}^d} a_{\rm sym} (q-\xi) \mu(q,\xi) \big( \mathbf{1}+ \ell_i \tilde\varphi^i_{0}(q) \big) \, dq \ + \  \ell^j \int\limits_{\mathbb{R}^d} c^j (q-\xi) \mu(q,\xi) \big( \mathbf{1} + \ell^i \tilde\varphi^i_{0}(q) \big) \, dq \ + \ O({|\boldsymbol{\ell}|}^2)
\\ [7mm] \displaystyle
= \ \big( \mathbf{1}+ \ell^i \tilde\varphi^i_{0}(\xi) \big) \Big[ \int\limits_{\mathbb{R}^d} a_{\rm sym} (\xi-q) \mu(\xi,q) \, dq \ + \  \ell^j \int\limits_{\mathbb{R}^d} c^j(\xi-q) \mu(\xi,q) \, dq \Big] \ + \ O({|\boldsymbol{\ell}|}^2).
\end{array}
\end{equation}
Relations \eqref{ER1} - \eqref{ER2} yield
\begin{equation}\label{ER3}
\begin{array}{l}
\displaystyle
\ell^i \, \int\limits_{\mathbb{R}^d} a_{\rm sym} (\xi-q) \mu(\xi,q) \tilde\varphi^i_{0}(q)  \, dq \ - \  \ell^i \, \int\limits_{\mathbb{R}^d} c^i(\xi-q) \mu(\xi,q) \, dq \ + \ O({|\boldsymbol{\ell}|}^2)
\\ [7mm] \displaystyle
= \  \ell^i \, \tilde\varphi^i_{0}(\xi) \, \int\limits_{\mathbb{R}^d} a_{\rm sym} (\xi-q) \mu(\xi,q) \, dq \ + \  \ell^i \, \int\limits_{\mathbb{R}^d} c^i(\xi-q) \mu(\xi,q) \, dq \  + \ O({|\boldsymbol{\ell}|}^2).
\end{array}
\end{equation}
Collecting the terms of the order $|\boldsymbol{\ell}|$ in \eqref{ER3} we deduce the equation for $\tilde\varphi^i_{0}$: 
\begin{equation}\label{ER4}
\int\limits_{\mathbb{R}^d} a_{\rm sym} (\xi-q) \mu(\xi,q) \big( \tilde\varphi^i_{0}(q) -  \tilde\varphi^i_{0}(\xi) \big)  \, dq \ = \ 2 \, \int\limits_{\mathbb{R}^d} c^i(\xi-q) \mu(\xi,q) \, dq.
\end{equation}
\medskip

Our subsequent reasoning relies on the following statement.

\begin{proposition}\label{p_from_6}
If $\alpha (-z) = \alpha(z)$ for all $z\in\mathbb R^d$ and $\alpha\in L^1(\mathbb R^d)$, then
\begin{equation}\label{L1_bis}
\int\limits_{\mathbb R^d} \int\limits_{\mathbb T^d}  \alpha (\xi-q) \mu (\xi,q)  \, d\xi  dq = \int\limits_{\mathbb R^d} \int\limits_{\mathbb T^d}  \alpha (\xi-q) \mu (q,\xi)  \, d\xi  dq;
\end{equation}
if $\beta (-z)= - \beta (z)$ for all $z\in\mathbb R^d$ and $\beta\in L^1(\mathbb R^d)$, then
\begin{equation}\label{L1_bis}
\int\limits_{\mathbb R^d} \int\limits_{\mathbb T^d}  \beta (\xi-q) \mu (\xi,q)  \, d\xi  dq = - \int\limits_{\mathbb R^d} \int\limits_{\mathbb T^d}  \beta (\xi-q) \mu (q,\xi)  \ d\xi \ dq.
\end{equation}
\end{proposition}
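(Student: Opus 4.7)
The plan is to fold both iterated integrals onto $\mathbb T^d\times\mathbb T^d$ by means of the periodicity of $\mu$, observe that periodization preserves parity, and then swap the labels of the two integration variables.

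\medskip
\noindent
First I would decompose $\mathbb R^d=\bigsqcup_{k\in\mathbb Z^d}(\mathbb T^d+k)$ and, for each $k$, change variables $q=q'+k$ with $q'\in\mathbb T^d$. Since $\mu$ is $\mathbb Z^d$-periodic in each argument, $\mu(\xi,q'+k)=\mu(\xi,q')$, and Fubini (justified by $\alpha\in L^1(\mathbb R^d)$ and $\mu$ bounded) gives
\begin{equation*}
\int_{\mathbb R^d}\!\!\int_{\mathbb T^d}\!\alpha(\xi-q)\mu(\xi,q)\,d\xi\,dq
\;=\;\int_{\mathbb T^d}\!\!\int_{\mathbb T^d}\!\hat\alpha(\xi-q')\,\mu(\xi,q')\,d\xi\,dq',
\end{equation*}
with $\hat\alpha(\eta)=\sum_{k\in\mathbb Z^d}\alpha(\eta+k)$, the notation already used in the paper. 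The same folding, now exploiting periodicity in the first argument via $\mu(q'+k,\xi)=\mu(q',\xi)$, turns the right-hand integral in \eqref{L1_bis} into the same double integral over $\mathbb T^d\times\mathbb T^d$ but with $\mu(q',\xi)$ replacing $\mu(\xi,q')$.

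\medskip
\noindent
Second, I would check that parity is inherited by periodization: with $\alpha$ even, the change $k\mapsto -k$ gives
\begin{equation*}
\hat\alpha(-\eta)=\sum_{k}\alpha(-\eta+k)=\sum_{k}\alpha(\eta-k)=\hat\alpha(\eta),
\end{equation*}
and the antisymmetric case is identical, so $\hat\beta(-\eta)=-\hat\beta(\eta)$.

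\medskip
\noindent
Finally, in the folded integral I swap the now interchangeable labels $\xi\leftrightarrow q'$:
\begin{equation*}
\int_{\mathbb T^d}\!\!\int_{\mathbb T^d}\!\hat\alpha(\xi-q')\mu(\xi,q')\,d\xi\,dq'
\;=\;\int_{\mathbb T^d}\!\!\int_{\mathbb T^d}\!\hat\alpha(q'-\xi)\mu(q',\xi)\,d\xi\,dq'.
\end{equation*}
Applying $\hat\alpha(q'-\xi)=\hat\alpha(\xi-q')$ produces the first identity of the Proposition, while $\hat\beta(q'-\xi)=-\hat\beta(\xi-q')$ yields the second with the opposite sign. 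I do not anticipate any real obstacle; the only point to watch is that the folding be performed consistently for the two sides, so that after the relabelling the two occurrences of $\mu$ carry arguments that differ only by transposition.
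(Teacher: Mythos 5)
Your proof is correct and complete: folding both iterated integrals onto $\mathbb T^d\times\mathbb T^d$ via the periodization $\hat\alpha$, checking that periodization preserves parity, and then relabelling $\xi\leftrightarrow q'$ is exactly the mechanism the paper relies on (it is the same passage from $a$ to $\hat a$ used in \eqref{Akappa} and \eqref{Kkappa}). The paper itself only cites Proposition 7 of \cite{PZh} here, so your argument supplies the details in the intended way, with the integrability of $\hat\alpha$ and the applicability of Fubini correctly secured by $\alpha\in L^1(\mathbb R^d)$ and the boundedness of $\mu$.
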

\begin{proof}[The proof {\rm is the same as that of Proposition 7 in \cite{PZh}}]
It is straightforward to check that the arguments used in the proof given in  \cite{PZh}  also apply to the operators
considered here. We leave the details to the reader.
\end{proof}

Since $c^i (-z) = -c^i (z)$ by our assumption, then Proposition \ref{p_from_6} yields
$$
\int\limits_{\mathbb{R}^d} \int\limits_{\mathbb{T}^d} c^i(\xi-q) \mu(\xi,q) \, dq  d\xi \ = \ 0,
$$
and consequently, there exists a unique (up to an additive constant) solution $ \tilde\varphi_0 \in (L^2(\mathbb{T}^d))^d$ of \eqref{ER4}. We choose the additive constant in such a way that $\int\limits_{\mathbb{T}^d} \tilde\varphi^i_{0}(\xi) d\xi = 0$ for any component of $\tilde\varphi_{0}$. Then \eqref{vl} implies that $\int\limits_{\mathbb{T}^d} v^{\boldsymbol{\ell}}_0(\xi)d\xi = 1 + O({|\boldsymbol{\ell}|}^2)$, and from \eqref{FA1} and \eqref{vl} we obtain that
\begin{equation}\label{ER4biss}
\begin{array}{l}
\displaystyle
 b^i({\boldsymbol{\ell}}) \ = \  \int\limits_{\mathbb R^d}  \int\limits_{\mathbb T^d} z^i \, \big(a_{\rm sym} (z) + \ell^j c^j (z)\big) \, \mu (\xi, \xi-z ) \, dz \,  \big(\mathbf{1} + \ell^j \tilde\varphi^j_{0} (\xi) \big) \, d\xi \ + \ O({|\boldsymbol{\ell}|}^2) \\ [7mm] \displaystyle
= \  \ell^j \, \int\limits_{\mathbb R^d}  \int\limits_{\mathbb T^d} z^i \, c^j(z) \, \mu (\xi, \xi-z )\, dz d\xi \ + \ \ell^j \, \int\limits_{\mathbb R^d}  \int\limits_{\mathbb T^d}
 z^i \,  a_{\rm sym} (z) \, \mu(\xi,\xi-z) \,  \tilde\varphi^j_{0}(\xi) \,  dz d\xi + O({|\boldsymbol{\ell}|}^2).
\end{array}
\end{equation}

\bigskip

In the case when $c_{\boldsymbol{\ell}} (z) \ = \ z \,  a_{\rm sym}(z) \, \omega_{\boldsymbol{\ell}}(z)$, it follows from equation \eqref{ER4} that \begin{equation}\label{eps1}
\tilde\varphi_0^{\boldsymbol{\ell}} = 2 \varkappa_{\rm sym} + r_{\boldsymbol{\ell}}, \quad \tilde\varphi_0^{\boldsymbol{\ell}}  \in (L^2 (\mathbb T^d))^d,
\end{equation}
where $\varkappa_{\rm sym}$ is the first corrector of the symmetric problem \eqref{PE_eps} that  satisfies the equation
\begin{equation}\label{ER4sym}
\int\limits_{\mathbb{R}^d} a_{\rm sym} (\xi-q) \, \mu(\xi,q) \, \big( \varkappa_{\rm sym}(q) -  \varkappa_{\rm sym}(\xi) \big)  \, dq \ = \ \int\limits_{\mathbb{R}^d} a_{\rm sym} (\xi-q) \, (\xi-q) \, \mu(\xi,q) \, dq,
\end{equation}
see also \cite{PZh}, and
\begin{equation}\label{vareps}
\| r_{\boldsymbol{\ell}} \|_{(L^2(\mathbb{T}^d))^d} \ \to \ 0 \quad \mbox{ as } \;{\boldsymbol{\ell}} \to 0.
\end{equation}
Indeed, denoting
$$
g_{\boldsymbol{\ell}}(\xi) = \int\limits_{\mathbb R^d}  z \,  a_{\rm sym} (z) \, \big(1 -   \omega_{\boldsymbol{\ell}}(z) \big)  \, \mu(\xi,\xi-z) \,  dz  
$$
and using \eqref{M2} we get that
\begin{equation}\label{eps2}
\max_{i=1,\ldots, d} \ \max_{\xi \in \mathbb{T}^d} |g^i_{\boldsymbol{\ell}}(\xi) | \ \to \ 0 \;\; \mbox{as } \; {\boldsymbol{\ell}} \to 0, \quad \mbox{and consequently } \; \| g_{\boldsymbol{\ell}} \|_{(L^2(\mathbb{T}^d))^d} \ \to \ 0 \;\; \mbox{ as } \;  {\boldsymbol{\ell}} \to 0.
\end{equation}
After substitution \eqref{eps1} in \eqref{ER4}, considering equation \eqref{ER4sym}, 
we come to a conclusion that $r_{\boldsymbol{\ell}}$ satisfies the following equation:
\begin{equation}\label{eps3}
\int\limits_{\mathbb T^d}  \hat a_{\rm sym} (\xi-\eta) \, \mu(\xi,\eta) \, (r_{\boldsymbol{\ell}}(\eta) - r_{\boldsymbol{\ell}}(\xi)) d\eta \ = \ 2 g_{\boldsymbol{\ell}}(\xi), \quad  r_{\boldsymbol{\ell}} \in (L^2(\mathbb{T}^d))^d.
\end{equation}
We can rewrite \eqref{eps3} as
\begin{equation}\label{eps4}
\Big(K_{\rm sym} \ - \ G_{\rm sym}  \Big) r_{\boldsymbol{\ell}} \ = \ 2 g_{\boldsymbol{\ell}},
\end{equation}
where the operators
$$
K_{\rm sym}f(\xi) =  \int\limits_{\mathbb T^d}  \hat a_{\rm sym} (\xi-\eta) \, \mu(\xi,\eta) \, f(\eta) d\eta, \;
G_{\rm sym} f(\xi) = G(\xi) f(\xi), \; G(\xi) = \int\limits_{\mathbb T^d} \hat a_{\rm sym} (\xi-\eta) \, \mu(\xi,\eta) \, d\eta
$$
apply component-wise.
Considering each component of $ r_{\boldsymbol{\ell}} = \{ r^i_{\boldsymbol{\ell}} \} $ separately and applying the Krein-Rutman theorem to the compact positivity improving operator $K_{\rm sym}$  (see \cite{KR}, Section 6,  or \cite{RS4}, Theorem XIII.44), we conclude that the operator
$\Big( G_{\rm sym}^{-1} K_{\rm sym} - \mathbb{E} \big)$ is invertible on $ L^2(\mathbb{T}^d) \ominus \{ \bf{1} \} $. Consequently,
equation \eqref{eps3} has a unique solution $r^i_{\boldsymbol{\ell}} \in  L^2(\mathbb{T}^d) \ominus \{ \bf{1} \}$.
Moreover,  \eqref{eps2} and \eqref{eps4} imply that
\begin{equation}\label{eps6}
\| r^i_{\boldsymbol{\ell}} \|_{L^2(\mathbb{T}^d)} \ \to \ 0 \quad \mbox{ as } \; {\boldsymbol{\ell}} \to 0,
\end{equation}
and thus \eqref{vareps} holds.

Next we substitute the right-hand side of \eqref{eps1} for $\tilde\varphi_0$ in \eqref{ER4biss}, and transform the resulting
relation with the help of  Proposition \ref{p_from_6} and  \eqref{eps6}. This yields
\begin{equation}\label{ER5}
\begin{array}{l}
\displaystyle
b^i ({\boldsymbol{\ell}})  =  \ell^j \, \int\limits_{\mathbb R^d}  \int\limits_{\mathbb T^d} z^i \, z^j \,  a_{\rm sym} (z) \,  \omega_{\boldsymbol{\ell}}(z) \, \mu (\xi, \xi-z )\, dz d\xi   \\ [7mm] \displaystyle  - \  2  \ell^j  \, \int\limits_{\mathbb R^d}  \int\limits_{\mathbb T^d} z^i \, \big( \varkappa^j_{\rm sym} (\xi-z) +   \frac12 \, r^j_{\boldsymbol{\ell}} (\xi-z) \big) \,
 a_{\rm sym} (z) \,  \omega_{\boldsymbol{\ell}}(z) \, \mu(\xi,\xi-z) \,  dz d\xi + O({|\boldsymbol{\ell}|}^2)  \\ [7mm] \displaystyle
 = \ell^j \, \int\limits_{\mathbb R^d}  \int\limits_{\mathbb T^d} z^i \, z^j \,  a_{\rm sym} (z)  \mu (\xi, \xi-z ) dz d\xi
 \\ [7mm] \displaystyle
 - \  2  \ell^j \, \int\limits_{\mathbb R^d}  \int\limits_{\mathbb T^d}  z^i \, \varkappa^j_{\rm sym} (\xi-z) \,
 a_{\rm sym} (z)  \mu(\xi,\xi-z)   dz d\xi + o({|\boldsymbol{\ell}|}).
\end{array}
\end{equation}
Since in the symmetric case $b(0) =0$ and $v_0 = \mathbf{1}$, then comparing \eqref{ER5} with \eqref{Theta}  and using one more time  the statement of Proposition \ref{p_from_6}  we come to \eqref{BER}.
\end{proof}

\noindent
{\bf \large Acknowledgement}

This work was completed during the visit of the second author at UiT, campus Narvik, in the Autumn 2018. The visit was supported by BFS/TFS project "Pure Mathematics in Norway".

\end{document}